\newcommand{\LL}{\mathcal{L}}
\newcommand{\FS}{S}
\newcommand{\CF}{\mathcal{F}}
\DeclareMathOperator{\GU}{GU}
\numberwithin{equation}{subsection}
\numberwithin{Theorem}{subsection}
\title{The local Jacquet-Langlands correspondence via Fourier analysis}
\email{jared@math.ucla.edu}
\address{UCLA Mathematics Department \\ Box 951555 \\ Los Angeles, CA 90095-1555}
\author{Jared Weinstein}
\begin{document}

\begin{abstract}
Let $F$ be a locally compact non-Archimedean field, and let $B/F$ be a division algebra of dimension 4.   The Jacquet-Langlands correspondence provides a bijection between smooth irreducible representations $\pi'$ of $B^\times$ of dimension $>1$ and irreducible cuspidal representations of $\GL_2(F)$.  We present a new construction of this bijection in which the preservation of epsilon factors is automatic.  This is done by constructing a family of pairs $(\mathcal{L},\rho)$, where $\mathcal{L}\subset M_2(F)\times B$ is an order and $\rho$ is a finite-dimensional representation of a certain subgroup of $\GL_2(F)\times B^\times$ containing $\mathcal{L}^\times$.    Let $\pi\otimes\pi'$ be an irreducible representation of $\GL_2(F)\times B^{\times}$;  we show that $\pi\otimes\pi'$ contains such a $\rho$ if and only if $\pi$ is cuspidal and corresponds to $\check{\pi}'$ under Jacquet-Langlands, and also that every $\pi$ and $\pi'$ arises this way.    The agreement of epsilon factors is reduced to a Fourier-analytic calculation on a finite ring quotient of $\mathcal{L}$.
\end{abstract}

\maketitle

\large

\section{Introduction}

Let $F$ be a non-Archimedean local field,  {\em{i.e.}} a finite extension either of $\Q_p$ or of the field of Laurent series over the finite field $\F_p$.  Let $B/F$ be a central simple algebra of
dimension $n^2$. The Jacquet-Langlands correspondence
assigns to each irreducible admissible representation $\pi'$ of
$B^{\times}$ a square-integrable representation $\pi$ of $\GL_n(F)$. The
passage $\pi'\mapsto\pi$ is characterized by a character relation;
it also manifests as a relationship between epsilon factors.  This
reciprocity between $\GL_n(F)$ and $B^{\times}$ was first proven in the
case of $n=2$ by Jacquet and Langlands~\cite{JacquetLanglands} in
both the local and global settings.   In the case of a division algebra in characteristic 0 it was established for all $n$
by Rogawski~\cite{Rogawski}.  The case of a general twist of $\GL_n(F)$ was carried out by Deligne, Kazhdan and Vign\'eras in~\cite{DKV} in characteristic 0 and Badulescu~\cite{badulescu} in characteristic $p$.  Each of these cases was accomplished by embedding the local problem into a global one and then applying trace formula methods.

Since then there has been a great deal of effort to construct the
Jacquet-Langlands correspondence in an explicit manner using purely
local techniques.  The simplest case is when $\pi'$ and $\pi$ are
both associated to a so-called ``admissible pair" $(E,\theta)$,
where $E/F$ is a field extension of degree $n$ and $\theta$ is a
character of $E^{\times}$. (All supercuspidal $\pi$ will arise this way if
$p\nmid n$.)  In this case the corresponding $\pi$ was constructed
explicitly by Howe~\cite{Howe}; G\'erardin~\cite{Gerardin}
constructed the representation $\pi'$ and proved that the epsilon
factors of $\pi$ and $\pi'$ agree. Henniart~\cite{HenniartJLI}
showed that if $n$ is a prime distinct from $p$ the representations $\pi$ and
$\pi'$ so constructed have the correct character identity.  Using
the technology of types laid down by Bushnell and Kutzko
in~\cite{BushnellKutzko}, Henniart and Bushnell construct the
explicit correspondence in the case of $n=p$ in ~\cite{HenniartJLII}
and in the case of $n$ a power of $p$ in~\cite{HenniartJLIII}.

In this paper we present a novel approach to the passage
$\pi'\mapsto\pi$ in the case $n=2$ in such a way that the
preservation of epsilon factors is manifest in the construction. Our
approach is entirely Fourier-analytic, and there is no special
treatment needed for the case $p=2$.   In that sense it is similar to G\'erardin-Li\~cite{GerardinLi}.
Unlike that paper, however, our method is linked to the theory of strata
developed for $\GL_n$ in~\cite{BushnellKutzko}.  The theory is summarized in Section~\ref{chainorders}.   Roughly speaking, a
stratum for $\GL_2$ is a certain sort of character of a compact open
subgroup of $\GL_2(F)$. Then irreducible representations of
$\GL_2(F)$ can be conveniently classified according to which strata
they contain. There is a notion of simple stratum:  these are parametrized by certain regular elliptic elements $\beta\in \GL_2(F)$.   It can be shown that (up to a twist) an admissible representation of
$\GL_2(F)$ contains a simple stratum if and only if it is
supercuspidal.  A similar notion of stratum exists for $B^{\times}$, and
strata for $B^{\times}$ are easily seen to be more or less the same objects
as simple strata for $\GL_2(F)$. It is therefore natural to try to
define the correspondence $\pi'\mapsto\pi$ relative to each stratum.

Let $\FS$ be a simple stratum associated to the regular elliptic element $\beta\in\GL_2(F)$, and let $\FS'$ be the stratum in $B^\times$ corresponding to $\FS$.   We choose an embedding of the field $E=F(\beta)$ into $B$.  Let $\Delta\from E\to M_2(F)\times B$ be the diagonal map.  We construct what we have called a
``linking order" $\LL_{\FS}$ inside $M_2(F)\times B$;  this is a $\Delta(\OO_E)$-order defined by certain congruence conditions.  We
then define a irreducible (and thus finite-dimensional) representation $\rho_{\FS}$ of the unit group $\LL_{\FS}^{\times}$ which is trivial on $\Delta(\OO_F^\times)$.   Then loosely speaking, the induction of $\rho_\FS$ to $\GL_2(F)\times B^\times$ will realize the Jacquet-Langlands correspondence for those representations $\pi$ which contain $\FS$.

To make this precise, we must pay careful attention to the role of the center $Z=F^\times \times F^\times$ of $\GL_2(F)\times B^\times$.   Choose a character $\omega$ of $F^\times=F^\times\times 1$ which extends  $\rho_\FS\vert_{(F^\times\times 1)\cap \LL_{\FS}^\times}$.  We will give a recipe for an extension of $\rho_{\FS}$ to the  group $\mathcal{K}_\FS=\Delta(E^\times)Z\LL_{\FS}^\times\subset \GL_2(F)\times B^\times$ whose restriction to $Z$ is $(g,h)\mapsto \omega(gh^{-1})$.   Call this representation $\rho_{\FS,\omega}$.

Let $\Pi_{\FS,\omega}$ be the compactly supported induction of
$\rho_{\FS,\omega}$ up to $\GL_2(F)\times B^{\times}$.  Then $\Pi_{\FS,\omega}$ is the direct sum of irreducible representations $\pi\otimes\pi'$ of $\GL_2(F)\times B^\times$;  here $\pi$ must have central character $\omega$ and $\pi'$ must have central character $\omega^{-1}$.  We show that $\Pi_{\FS}$
realizes the Jacquet-Langlands correspondence relative to the stratum $\FS$ and the character $\omega$ in
the following sense.  First, we show that a representation $\pi$ of
$\GL_2(F)$ (resp., $B^{\times}$) of central character $\omega$ (resp., $\omega^{-1}$) appears in $\Pi_S$ if and only if $\pi$
(resp., $\check{\pi}$) contains $\FS$ (resp., $\FS'$). Then, we show that an irreducible admissible
representation $\pi\otimes\check{\pi}'$ of $\GL_2(F)\times B^{\times}$
appears inside of $\Pi_{\FS}$ if and only if the epsilon factors of
$\pi$ and $\pi'$ agree up to a minus sign:
\begin{equation}
\label{agreement} \eps(\pi\chi,s,\psi)=-\eps(\pi'\chi,s,\psi).
\end{equation}
Here $\chi$ runs through sufficiently many characters of $F^{\times}$ to
determine $\pi$ from $\pi'$ uniquely. Therefore if $\pi$ is a given
supercuspidal irreducible representation of $\GL_2(F)$ which
contains the stratum $\FS$, then $\Hom_{
\GL_2(F)}\left(\pi,\Pi_\FS\right)$ is a sum of copies of a single
supercuspidal representation $\pi'$ of $B^{\times}$. Then the
contragredient representation of $\pi'$ is the one corresponding to
$\pi$ under the Jacquet-Langlands correspondence.

The linking orders $\LL_{\FS} $ are constructed in
Section~\ref{linkingorderssection}.  We also define corresponding
additive characters $\psi_{\FS}$ of the ring $M_2(F)\times B$ for
which the $\OO_F$-module
$$\LL_{\FS}^*=\set{x\in M_2(F)\times B\biggm{\vert} \psi_{\FS}
(x\LL_\FS)=1}$$ happens to be a two-sided ideal in $\LL_\FS$.  The
required representation $\rho_\FS$ of $\LL_{\FS}^{\times}$ is inflated from
a representation of the unit group of the finite $k$-algebra $\mathcal{R}_\FS=
\LL_{\FS}/\LL_{\FS}^*$. The additive character $\psi_\FS$
descends to a nondegenerate additive character of this ring, so that
we have a theory of Fourier transforms $f\mapsto\CF_\FS f$ for
functions $f$ on $\mathcal{R}_\FS$.  The characteristic property of
$\rho_\FS$ is that its matrix coefficients $f$, considered as
functions on $\mathcal{R}_\FS$ supported on $\mathcal{R}^{\times}_\FS$,  satisfy the
functional equation
\begin{equation}
\label{finitefourier} \CF_\FS f(y)= \pm f(y^{-1})
\end{equation}
 for $y\in\mathcal{R}^{\times}_\FS$;  see Prop.~\ref{replevel0} and Theorem~\ref{rhonu}.  (The sign in this equation depends only on $\FS$.)
The functional equation in Eq.~\ref{finitefourier} on the level of
finite rings is used in Section~\ref{proof} to deduce the functional
equation in Eq.~\ref{agreement} concerning constituents of the
induced representation of $\rho_\FS$ up to $\GL_2(F)\times B^{\times}$.

The reader may be wondering if this sort of strategy may be extended
to the general case of $\GL_n$, where one still lacks a complete
local proof of the existence of the correspondences.  It will not
be difficult to extend the definitions of $\LL_{\FS}$, $\rho_{\FS}$,
and $\Pi_\FS$ to this context.  In doing so one would produce a
recipe for some sort of correspondence $\pi'\mapsto\pi$ for $\pi$
supercuspidal which satisfies Eq.~\ref{agreement} for a certain collection of characters $\chi$.  For $n=3$, we do not know if this collection of characters is enough to characterize the Jacquet-Langlands correspondence.  And for $n>4$, the establishment of Eq.~\ref{agreement} for {\em all} characters is not enough to characterize the correspondence.  One would have to work
harder to obtain access to the characters of the representations
$\pi$ and $\pi'$ so constructed in order to prove the right
identity.

The present effort fits into a larger program concerning the
geometry of Lubin-Tate curves.  Suppose $F$ has uniformizer $\pi_F$ and residue field $k$.  Let $\mathcal{F}_0$ be a formal $\OO_F$-module of height 2 over the algebraic closure of the residue field $k$ of $F$.  For each $m\geq 0$, consider the functor that assigns to each complete local Noetherian $\hat{\OO}_{F^{\text{nr}}}$-algebra $A$ having residue field $\overline{k}$ the set of formal $\OO_F$-modules $\mathcal{F}$ over $A$ equipped with an isomorphism $\mathcal{F}_0\to\mathcal{F}_{\overline{k}}$ and a Drinfeld $\pi_F^m$-level structure.  This functor is represented by a formal curve $X_m$ over $\hat{\OO}_{F^{\text{nr}}}$.   The inverse system of curves $(X_m)_{m\geq 1}$ admits an action by a subgroup $\mathcal{G}$ of the triple product group $\GL_2(F)\times B^\times\times W_F$ of ``index $\Z$".   It is known by the theorems of Deligne and
Carayol, see~\cite{Carayol:ladicreps2}, that the $\ell$-adic \'etale cohomology of
this curve realizes (up to some benign modifications) both the Jacquet-Langlands correspondence
$\pi'\mapsto\pi$ and the local Langlands correspondence
$\sigma\mapsto\pi(\sigma)$ for the discrete series of $\GL_2(F)$.

It would be very interesting to compute a system of semistable models of the curves $X_m$ over a ramified extension of $\hat{\OO}_{F^\text{nr}}$;  then the special fiber of the system ought to realize the
supercuspidal parts of the correspondences in its cohomology.   This has already been done in the ``level 0" case by Bouw-Wewers~\cite{Wewers};  the generalization of the level 0 case for $\GL_n$ was carried out by Yoshida~\cite{yoshida}.   But for higher levels the structure of this special fiber is still unknown.  Ignore the Weil group for the moment and consider the action of $(\GL_2(F)\times B^\times)\cap \mathcal{G}$ on the semi-stable reduction of the system $(X_m)_{m\geq 1}$.  We conjecture that for a simple stratum $\FS$ arising from an elliptic element $\beta\in\GL_2(F)$, the special fiber contains a smooth component $X_\FS$ whose stabilizer is exactly $\Delta(E^\times)\mathcal{L}_\FS^\times$, such that for primes $\ell\neq p$, the $\ell$-adic versions of the representations $\rho_\FS$ appear in the action of this group on $H^1(X_\FS,\QQ_\ell)$.   In light of the preceding paragraphs this would be consistent with the theorems of Deligne-Carayol.  In future work we intend to give a candidate for the structure of the special fiber of the stable reduction of $X_m$ which includes the action of the Weil group $W_F$.

\section{Preparations:  The representation theory of $\GL_2(F)$ and $B^{\times}$}

\subsection{Basic Notations}  In this paper, $F$ will be a finite extension of $\Q_p$, or else a finite extension of $\F_p((T))$.   For a finite extension $E$ of $F$ (possibly $F$ itself), we use the notation $\OO_E$, $\gp_E$, and $k_E$ for the ring of integers, maximal ideal, and quotient field of $E$.  Let $q_E=\# k_E$, and let $q=q_F$.  We fix a uniformizer $\pi_F$ for $F$.   Let $\abs{\;}_F$ be the absolute value on $F^*$ for which $\norm{\pi_F}=q^{-1}$.

We also fix a character $\psi_F$ of $F$ of level 1;  this means that $\psi_F$ vanishes on $\gp_F$ but not on $\OO_F$.

Let $B/F$ be a division algebra of dimension 4; this is unique up to
isomorphism. Let $\OO_B$ be its unique maximal order.  We use
$N_{B/F}$ and $\tr_{B/F}$ to denote the reduced norm and trace,
respectively, from $B$ to $F$; sometimes we will omit the ``$B/F$"
from this notation. If $G$ is the group $\GL_2(F)$ or $B^{\times}$, and
$g\in G$, we will use the notation $\norm{g}_G$ to mean $\abs{\det
g}_F$ or $\abs{\N g}_F$ as appropriate.

Let $A$ be the algebra $M_2(F)$ or $B$.  For any additive character
$\psi$ of $F$, let $\psi_A$ be the character of $A$ defined by
$\psi_A(x)=\psi(\tr_{A/F} x)$.   Let $\mu_{\psi_A}$ (or just
$\mu_\psi$) be the measure on $A$ which is self-dual with respect to
$\psi$.


 Let $\mu_\psi^{\times}$ be the corresponding Haar measure on
$A^{\times}$: $\mu_\psi^{\times}(g)=\norm{\det g}^{-2}_G\mu_\psi(g)$.

\subsection{Chain Orders and Strata}
\label{chainorders}

In this subsection, $A$ is the algebra $M_2(F)$ or $B$.  We will
closely follow the notation of~\cite{Henniart:Bushnell} concerning
chain orders and strata for $\GL_2$, where the situation is somewhat
simpler than the general case of $\GL_n$.

First consider the case $A=M_2(F)$.  A {\em lattice chain} is an
$F$-stable family of lattices $\Lambda=\set{L_i}$ with each
$L_i\subset F\oplus F$ an $\OO_F$-lattice and $L_{i+1}\subset L_i$,
all integers $i$. Let $e(\Lambda)$ be the unique integer for
which $\pi_FL_i=L_{i+e(\Lambda)}$.  Let
$\mathfrak{A}_{\Lambda}$ be the stabilizer in $A$ of
$\Lambda$;  that is, $\mathfrak{A}_{\mathcal{L}}=\set{a\in
A\vert aL_i\subset L_i,\text{ all }i}$.   A {\em chain order} in $A$
is an $\OO_F$-order $\mathfrak{A}\subset A$ equal to
$\mathfrak{A}_{\mathcal{L}}$ for some lattice chain $\Lambda$.
We set $e_{\mathfrak{A}}=e_{\Lambda}$.

For example, suppose $E/F$ is a quadratic field extension of
ramification index $e$.  Identify $E$ with $F\oplus F$ as $F$-vector
spaces. Then $\Lambda=\set{\gp^i_E}$ is a lattice chain with
$e_\mathcal{L}=e$.  Up to conjugation by an element of $A^{\times}$, every
lattice chain arises in this manner.  We have the following
description of $\mathfrak{A}$, again only up to $A^{\times}$-conjugation:
$$\mathfrak{A}=\begin{cases} M_2(\OO_F),&e=1,\\
\tbt{\OO_F}{\OO_F}{\gp_F}{\OO_F},&e=2.
\end{cases}$$
Note also that $\mathfrak{A}^{\times}\subset A^{\times}$ is normalized by
$E^{\times}\subset\GL_2(F)$, and that $\OO_E\subset\mathfrak{A}$.

For a chain order $\mathfrak{A}\subset M_2(F)$, let $\mathcal{K}_{\mathfrak{A}}$ be its normalizer in $\GL_2(F)$.
This equals $F^*M_2(\OO_F)$ if $e_{\mathfrak{A}}=1$.  If $e_{\mathfrak{A}}=2$ then $\mathcal{K}_{\mathfrak{A}}$ is the semidirect product of $\mathfrak{A}^\times$ with the cyclic group generated by a prime element of $\mathfrak{A}$.

Let $\mathfrak{P}_{\mathfrak{A}}$ be the Jacobson radical of $\mathfrak{A}$:  this equals $\pi_FM_2(\OO_F)$ for $\mathfrak{A}=M_2(\OO_F)$ and $\tbt{\gp_F}{\OO_F}{\gp_F}{\gp_F}$ in the case that $\mathfrak{A}=\tbt{\OO_F}{\OO_F}{\gp_F}{\OO_F}$.  We have a filtration of $\mathfrak{A}^\times$ by the subgroups $U_{\mathfrak{A}}^n=1+\mathfrak{P}_{\mathfrak{A}}^n$.  This filtration is normalized by $\mathcal{K}_{\mathfrak{A}}$.
All of the above constructions have obvious (and simpler) analogues in the quaternion algebra $B$:  If $\mathfrak{A}=\OO_B$ is the maximal order in $B$, then the normalizer of $\mathfrak{A}^\times$ in $B^\times$ is all of $B^\times$.  The Jacobson radical $\mathfrak{P}_\mathfrak{A}$ is the unique maximal two-sided ideal of $\mathfrak{A}$, generated by a prime element $\pi_B$; we let $U_{\mathfrak{A}}^n=1+\mathfrak{P}_\mathfrak{A}$ and $e_{\mathfrak{A}}=2$.



\begin{defn}  Let $A$ be the matrix algebra $M_2(F)$ or the quaternion algebra
$B$.  A {\em stratum} in $A$ is a triple $(\mathfrak{A},n,\alpha)$, where $\mathfrak{A}$
is a chain order if $A=M_2(F)$ (resp. $\OO_B$ if $A=B$), $n$ is an
integer, and $\alpha\in \gP_{\mathfrak{A}}^{-n}$.  Two strata
$(\mathfrak{A},n,\alpha)$ and $(\mathfrak{A},n,\alpha')$ are
equivalent if $\alpha\equiv\alpha'\pmod{\gP^{1-n}}$.    The stratum
$(\mathfrak{A},n,\alpha)$ is {\em ramified simple} if $E=F(\alpha)$
is a ramified quadratic extension of $F$, $n$ is odd, and $\alpha\in
E$ has valuation exactly $-n$.   The stratum is {\em unramified
simple} if $E$ is an unramified quadratic extension of $F$,
$\alpha\in E$ has valuation exactly $-n$, and the minimal polynomial
of $\pi_F^n\alpha$ is irreducible mod $\gp_F$.  Finally, the stratum
is {\em simple} if it is ramified simple or unramified simple.
\end{defn}

There is a correspondence $\mathfrak{S}'\mapsto\mathfrak{S}$ between
simple strata in $B$ and simple strata in $M_2(F)$.  Given the
simple stratum $\mathfrak{S}'=(\mathfrak{A}',n',\alpha')$, let
$E=F(\alpha')$. Choose an embedding $E\injects M_2(F)$, and let
$\alpha$ be the image of $\alpha'$.  Finally, let
$\mathfrak{A}\subset M_2(F)$ be a chain order associated to $E$.
Then $\mathfrak{S}=(\mathfrak{A},n,\alpha)$.  The correspondence
$\mathfrak{S}'\to\mathfrak{S}$ is a bijection between conjugacy
classes of simple strata in $B$ and in $M_2(F)$, respectively.  The
relationship between $n'$ and $n$ is as follows:  $n'=n$ if $E/F$ is
ramified and $n'=2n$ if $E/F$ is unramified.

Let $\pi$ be an irreducible admissible representation of $\GL_2(F)$.
The level $\ell(\pi)$ is defined to be the least value of $n/e$,
where $(n,e)$ runs over pairs of integers for which there exists a
chain order $\mathfrak{A}$ of ramification index $e$ such that $\pi$
contains the trivial character of $U_{\mathfrak{A}}^{n+1}$.  If
$\pi$ is a representation of $B^{\times}$, we define $\ell(\pi)$ to be
$n/2$, where $n$ is the least integer for which $\pi$ contains the
trivial character of $U_{\OO_B}^{n+1}$.

We shall call $\pi$ {\em minimal} if its level cannot be lowered by
twisting by one-dimensional characters of $F^{\times}$.

When $n\geq 1$, a stratum $\FS=(\mathfrak{A},n,\alpha)$ of $M_2(F)$
or $B$ determines a nontrivial character $\psi_\alpha$ of
$U_{\mathfrak{A}}^n/U_{\mathfrak{A}}^{n+1}$ by
$\psi_\alpha(1+x)=\psi_F(\tr_{A/F}(\alpha x))$.  This character only
depends on the equivalence class of $\FS$.

If $\FS$ is a stratum, we say that $\pi$ {\em contains the stratum}
$\FS$ if $\pi\vert_{U_{\mathfrak{A}}^n}$ contains the character
$\psi_\alpha$.  From~\cite{Henniart:Bushnell}, 14.5 Theorem, we have
the following classification of supercuspidal representations of
$\GL_2(F)$:

\begin{Theorem} \label{classification1} A minimal irreducible representation $\pi$ of $\GL_2(F)$ is supercuspidal if and only if exactly one of the following conditions holds:
\begin{enumerate}
\item $\pi$ has level 0, and $\pi$ is contains a representation of $\GL_2(\OO_F)$ inflated from an irreducible cuspidal representations of $\GL_2(k_F)$.
\item $\pi$ has level $\ell>0$, and $\pi$ contains a simple stratum.
\end{enumerate}
\end{Theorem}

The classification of representations of $B^{\times}$ is analogous:

\begin{Theorem} \label{classificationB} A minimal irreducible
representation $\pi$ of $B^{\times}$ of dimension greater than one
satisfies exactly one of the following properties:
\begin{enumerate}
\item $\pi$ has level 0, and $\pi$ contains a representation of
$\OO_B^{\times}$ inflated from a character $\chi$ of $k_B^{\times}$ not factoring
through the norm map $k_B^{\times}\to k^{\times}$.
\item $\pi$ has level $\ell>0$, and $\pi$ contains a simple stratum.
\end{enumerate}
\end{Theorem}
By $k_B$ we mean the finite field $\OO_B/\gP_B$:  this is a
quadratic extension of $k$.

The supercuspidal representations of $\GL_2(F)$ and $B^{\times}$ are all
induced from irreducible representations of open compact-mod-center
subgroups in a manner which can be made explicit.  Suppose
$\FS=(\mathfrak{A},n,\alpha)$ is a simple stratum in $M_2(F)$ or
$B$.    Let $E\subset \GL_2(F)$ be the subfield $F(\alpha)$. The
definition of $\psi_\alpha$ given above is well-defined on the
subgroup $U_{\mathfrak{A}}^{\floor{n/2}+1}$. Let $J_{\FS}\subset
\GL_2(F)$ denote the group $E^{\times}U_{\mathfrak{A}}^{\floor{(n+1)/2}}$
and let $C(\psi_\alpha, \mathfrak{A})$ denote the set of isomorphism
classes of irreducible representations $\Lambda\in\hat{J}_\FS$
for which $\Lambda\vert_{U_{\mathfrak{A}}^{\floor{n/2}+1}}$ is a
multiple of $\psi_\alpha$.

\begin{defn} A {\em cuspidal inducing datum} in $A^{\times}$
is a pair $(\mathfrak{A},\Xi)$, where $\mathfrak{A}$ is a chain
order in $A$ and $\Xi$ is a representation of
$\mathcal{K}_{\mathfrak{A}}$ of one of the following types:
\begin{enumerate}
\item $A=M_2(F)$, $\mathfrak{A}\isom M_2(\OO_F)$, and the restriction of
$\Xi$ to $\GL_2(\OO_F)$ is inflated from a cuspidal representation
of $\GL_2(k)$.
\item $A=B$, and the restriction of $\Xi$ to $\OO_B^{\times}$ contains a
character of inflated from a character of $k_B^{\times}$ not factoring
through the norm map $k_B^{\times}\to k^{\times}$.
\item There is a simple stratum $(\mathfrak{A},n,\alpha)$ and a
representation $\Lambda\in C(\psi_\alpha,\mathfrak{A})$ for which
$\Xi=\Ind_{J_{\FS}}^{\mathcal{K}_{\mathfrak{A}}}\Lambda$.
\end{enumerate}
In the first two cases we will say that $(\mathfrak{A},\Xi)$ has
level zero.  In the third case, we will say that
$(\mathfrak{A},\Xi)$ has level $n$.
\end{defn}

The following construction of supercuspidal representations is found
in Section 15.5 of~\cite{Henniart:Bushnell} in the case of
$A=M_2(F)$:

\begin{Theorem} \label{classification2} If $(\mathfrak{A},\Xi)$ is a
cuspidal inducing datum then
$\pi_\Xi=\Ind_{\mathcal{K}_{\mathfrak{A}}}^{A^{\times}}\Xi$ is an
irreducible minimal supercuspidal representation of $A^{\times}$.
Conversely, every minimal supercuspidal representation of $A^{\times}$
arises in this manner. The cuspidal inducing datum
$(\mathfrak{A},\Xi)$ has level zero if and only if $\pi$ has level
zero.  Furthermore, $(\mathfrak{A},\Xi)$ arises from the simple
stratum $\FS$ if and only if $\pi$ contains $\FS$.
\end{Theorem}


\subsection{Zeta functions and local constants}

In this section we follow Godement and Jacquet
~\cite{Godement:Jacquet}. Let $A$ be the algebra $B$ or $M_2(F)$,
and let $G=A^{\times}$. Let $\psi\in\hat{F}$ be an additive character of
$F$. Let $\pi$ be a supercuspidal (not necessarily irreducible)
representation of $G$, realized on the space $W$. When $w\in W$,
$\check{w}\in \check{W}$, we let $\gamma_{\check{w},w}\from G\to\C$
denote the function
$$g\mapsto \class{\check{w},\pi(g)w}.$$  Let $\mathcal{C}(\pi)$
denote the $\C$-span of the functions $\gamma_{\check{w},w}$ for
$w\in W$, $\check{w}\in \check{W}$. These functions are compactly
supported modulo the center $Z$ of $G$.

Let $C^\infty_c(A)$ be the space of locally constant compactly supported complex-valued functions on $A$.  For $\Phi\in C^\infty_c(A)$ and $f\in \mathcal{C}(\pi)$, define
the zeta function
$$\zeta(\Phi,f,s)=\int_G \Phi(g)f(g)\norm{g}^s\;d\mu^{\times}_\psi(g).$$
When $\pi$ is irreducible (and still cuspidal), there is a rational
function $\eps(\pi,s,\psi)\in\C(q^{-s})$ satisfying
$$\zeta\left(\hat{\Phi},\check{f},\tfrac{3}{2}-s\right)=\eps(\pi,s,\psi)\zeta\left(\Phi,f,\tfrac{1}{2}+s\right),$$ where $\hat{\Phi}$ is the Fourier transform of $\Phi$ with respect to $\psi$.  (Since $\pi$ is cuspidal, its $L$-function vanishes.)

The local constant further satisfies
\begin{equation}
\label{duality}
\eps(\pi,s,\psi)\eps(\check{\pi},1-s,\psi)=\omega_\pi(-1)
\end{equation}
where $\omega_\pi$ is the central character of $\pi$.

\subsection{Converse Theory}


By the converse theorem, a supercuspidal representation of
$\GL_2(F)$ of $B^\times$ is determined by the epsilon factors of all of its twists by
one-dimensional characters. We need an effective version of this
theorem, which states that a supercuspidal representation is determined up to isomorphism by the data of its level together with the epsilon factors of
twists of $\pi$ by a collection of characters of $F^{\times}$ of bounded
level.


Next, we observe that epsilon factors have the ``stability" property.  If $\chi$ is a character of $F^*$, let the level $\ell(\chi)$ be the least integer $n$ such that $\chi$ vanishes on $1+\gp_F^{n+1}$.  Then if $\pi$ is an irreducible representation of $\GL_2(F)$ or $B^\times$, and $\chi$ is a character of $F^\times$ with $\ell(\chi)>\ell(\pi)$, then $\eps(\pi\chi,s,\psi)$ only depends on $\chi$ and the central character of $\pi$ (and of course $\psi$).  This is Prop. 3.8 of~\cite{JacquetLanglands} in the case of $\GL_2(F)$ and Prop. 2.2.5 of~\cite{GerardinLi} in the case of $B^\times$.

As $\chi$ varies through all characters of $F^\times$, the quantities $\eps(\chi\pi,s,\psi)$ determine $\pi$ up to isomorphism.  We may therefore conclude the following explicit converse theorem:
\begin{Theorem}\label{converse}  Let $\pi_1$ and $\pi_2$ be two minimal
supercuspidal representations of $\GL_2(F)$ or $B^{\times}$ having
the same central character and equal level $\ell$.
Then
$\pi_1\isom\pi_2$ if and only if
\begin{equation}
\label{epseq}
 \eps(\pi_1\chi,s,\psi)=\eps(\pi_2\chi,s,\psi)
\end{equation}
for all characters $\chi\in\hat{F}^{\times}$ for which $\ell(\chi)\leq \ell$.
\end{Theorem}

\begin{defn}
For minimal supercuspidal representations $\pi'$ and $\pi$ of $B^{\times}$ and $\GL_2(F)$ having
the same central character, we say that $\pi'$ and $\pi$ {\em
correspond} if the following conditions hold:
\begin{enumerate}
\item $\pi$ and $\pi'$ have the same level $\ell$.
\item The equation
$$\eps(\pi\chi,s,\psi)=-\eps(\pi'\chi,s,\psi)$$ holds for all characters $\chi$ with $\ell(\chi)\leq \ell$.
\end{enumerate}
\end{defn}

In view of Theorem~\ref{converse}, at most one $\pi$ can correspond
to a given $\pi'$, and vice versa.

\section{Zeta functions for $GL_2(F)\times B^{\times}$.}

In this section we adopt the abbreviations $A_1=M_2(F)$, $A_2=B$,
$G_1=GL_2(F)$, $G_2=B^{\times}$.

Let $\mathbf{A}=A_1\times A_2$.  Let
$\mathbf{G}=\mathbf{A}^{\times}=\GL_2(F)\times B^{\times}$.  We will define zeta
functions for representations of $\mathbf{G}$ and use them to give a
criterion for when such a representation ``realizes the
Jacquet-Langlands correspondence."  We will adopt the convention
that if $g\in \mathbf{G}$, then $g_1$ and $g_2$ are its projections
in $\GL_2(F)$ and $B^{\times}$ respectively.  Let $\Pi$ be an admissible
cuspidal representation of $\mathbf{G}$.  For $\Phi\in
C_c^\infty(\mathbf{A})$ and $f\in\mathcal{C}(\Pi)$, define the zeta
function
$$\zeta(\Phi,f,s)=\int_{\mathbf{G}} \Phi(g)f(g)\norm{g_1}^{s}\norm{g_2}^{2-s}d\mu^{\times}(g),$$ where $\mu^{\times}$ is a Haar measure on $\mathbf{G}$.

Let $\psi$ be an additive character of $F$, and let
$\mu^{\times}_\psi=\mu^{\times}_{\mathbf{A},\psi}=\mu_{A_1,\psi}^{\times}\times\mu_{A_2,\psi}^{\times}$;
this is a Haar measure on $\mathbf{G}$.  Let $\psi_\mathbf{A}$ be
the additive character
$(x_1,x_2)\mapsto\psi_{A_1}(x_1)\psi_{A_2}(-y_1)$.  The Fourier transform of a decomposable test function $\Phi=\Phi_1\otimes\Phi_2\in C_c^\infty(\mathbf{A})$ is $\hat{\Phi}(x_1,x_2)=\hat{\Phi}_1(x_1)\hat{\Phi}_2(-x_2).$
Consequently if $f=f_1\otimes f_2\in \mathcal{C}(\pi_1\otimes\pi_2)$
is a decomposable matrix coefficient for a tensor product
representation $\pi_1\otimes\pi_2$ of $\mathbf{G}$, then
\begin{equation}
\label{doublezeta}
\zeta(\hat{\Phi},f,s)=\omega_{\pi_2}(-1)\zeta(\hat{\Phi}_1,f_1,s)\zeta(\hat{\Phi}_2,f_2,2-s),
\end{equation}
where $\omega_{\pi_2}$ is the central character of $\pi_2$.

\begin{prop} \label{FE} Let $\Pi$ be an admissible cuspidal semisimple (not
necessarily irreducible) representation of $\GL_2(F)\times B^{\times}$.
The following are equivalent:
\begin{enumerate}
\item For every irreducible representation $\pi_1\otimes\pi_2$ of
$\GL_2(F)\times B^{\times}$ appearing in $\Pi$, we have
$$\eps(\pi_1,s,\psi)=-\eps(\check{\pi}_2,s,\psi).$$
\item The functional equation
\begin{equation}
\label{BIGFE} \zeta(\Phi,f,s)=-\zeta(\hat{\Phi},\check{f},2-s)
\end{equation}
holds for all $\Phi\in C_c^{\infty}(\mathbf{A})$,
$f\in\mathcal{C}(\Pi)$.  (Here the integral is taken with respect to
the measure $\mu^{\times}_{\mathbf{A},\psi}$, and the Fourier transform is
taken with respect to the character $\psi_{\mathbf{A}}$.)
\end{enumerate}
\end{prop}


\begin{proof} It will simplify our notation if we set $s_1=s$,
$s_2=2-s$.  Let $\pi_1\otimes\pi_2$ be any irreducible
representation of $G_1\times G_2$ appearing in $\Pi$. For $i=1,2$,
let $\Phi_i\in C^\infty_c(G_i)$ and $f_i\in\mathcal{C}(\pi_i)$ be
such that $\zeta(\Phi_i,f_i,s_i)\neq 0$.  Let
$\Phi=\Phi_1\otimes\Phi_2$ and $f=f_1\otimes f_2$.  The respective
functional equations for $\pi_1$ and $\pi_2$ are
$$\zeta(\hat{\Phi}_i,\check{f}_i,2-s_i)=\eps\left(\pi_i,s_i-\tfrac{1}{2},\psi\right)\zeta(\Phi_i,f_i,s_i),\;i=1,2.$$
Multiplying these together and applying Eq.~\ref{doublezeta} yields
$$\omega_{\pi_2}(-1)\zeta(\hat{\Phi},
\check{f},2-s)=\eps\left(\pi_1,s-\tfrac{1}{2},\psi\right)\eps\left(\pi_2,\tfrac{3}{2}-s,\psi\right)\zeta(\Phi,
f,s).$$ Therefore Eq.~\ref{BIGFE} holds if and only if
$$\eps\left(\pi_1,s-\tfrac{1}{2},\psi\right)\eps\left(\pi_2,\tfrac{3}{2}-s,\psi\right)=-\omega_{\pi_2}(-1).$$
Combining this with the standard relation
$$\eps\left(\pi_2,\tfrac{3}{2}-s,\psi\right)\eps\left(\check{\pi}_2,s-\tfrac{1}{2},\psi\right)=\omega_{\pi_2}(-1)$$
yields
$$\eps\left(\pi_1,s-\tfrac{1}{2},\psi\right)=-\eps\left(\check{\pi}_2,s-\small{\tfrac{1}{2}},\psi\right).$$
We see now that (2)$\implies$(1): Apply Eq.~\ref{BIGFE} to an
arbitrary matrix coefficient $f=f_1\otimes f_2$ belonging to
$\pi_1\otimes\pi_2\subset \Pi$.  For the converse, one need only
note that every $\Phi\in C_c^{\infty}(\mathbf{A})$ and $f\in
\mathcal{C}(\Pi)$ is a finite sum of pure tensors, and
$\zeta(\Phi,f,s)$ is linear in $\Phi$ and $f$.
\end{proof}

Combining Prop.~\ref{FE} with the Converse Theorem~\ref{converse}
gives a necessary and sufficient condition for a representation
$\Pi$ of $\GL_2(F)\times B^{\times}$ to realize the Jacquet-Langlands
correspondence.  When $f\in C_c^\infty(\mathbf{G})$ and
$\chi\in\hat{F}^{\times}$, we let $\chi f$ be the function
$g\mapsto\chi(\det(g_1)\N(g_2)^{-1})f(g)$.
\begin{Cor} \label{FEcor} Let $\Pi$ be an admissible cuspidal semisimple representation of $\GL_2(F)\times B^{\times}$ on which the
diagonally-embedded group $\Delta(F^{\times})$ acts trivially. Assume
either that every irreducible representation of $\GL_2(F)$ (resp.,
$B^{\times}$) appearing in $\Pi$ is minimal of the same level $\ell$. Then the
following are equivalent:
\begin{enumerate}
\item $\Pi$ is the direct sum of irreducible representations of $\mathbf{G}$ of the form
$\pi_1\otimes\check{\pi}_2$, where $\pi_1$ and $\pi_2$ correspond.
\item The functional equation
\begin{equation}
\label{BIGFEchi} \zeta(\Phi,\chi
f,s)=-\zeta(\hat{\Phi},\chi^{-1}\check{f},2-s) \end{equation} holds
for all $\Phi\in C_c^{\infty}(\mathbf{A})$, $f\in\mathcal{C}(\Pi)$,
and for all characters $\chi\in\hat{F}^{\times}$ for which $\ell(\chi)\leq \ell$.
\end{enumerate}
\end{Cor}
\begin{proof}  That (1)$\implies$(2) is clear from Prop.~\ref{FE}.
Therefore assume (2).  Suppose $\pi_1\otimes\check{\pi}_2$ appears
in $\Pi$.  Since $\Pi$ vanishes on $\Delta(F^{\times})$, the central
characters of $\pi_1$ and $\pi_2$ agree.  By Prop.~\ref{FE} we find
that $\eps(\pi_1\chi,s,\psi)=-\eps(\pi_2\chi,s,\psi)$ for all
characters $\chi$ of level no greater than $\ell$, so $\pi_1$ and $\pi_2$
correspond.
\end{proof}

\section{Linking orders and congruence subgroups of $\GL_2(F)\times
B^{\times}$ } \label{linkingorderssection}

Our goal now is to produce, for each simple stratum $\FS$ in
$M_2(F)$, a certain semisimple representation $\Pi_\FS$ of $\GL_2(F)\times B^{\times}$
having the following properties:

\begin{enumerate}
\item $\Pi_\FS$ vanishes on the diagonal subgroup
$\Delta(F^{\times})\subset\GL_2(F)\times B^{\times}$.
\item The restriction of $\Pi_\FS$ to the first factor $\GL_2(F)$
is a sum of exactly those irreducible representations which contain
$\FS$.  Similarly, the restriction of $\Pi_\FS$ to the second factor
$B^{\times}$ is a sum of exactly those irreducible representations of $B^{\times}$
which contain the corresponding stratum $\FS'$ in $B$.
\item Matrix coefficients for $\Pi_\FS$ satisfy the functional
equation in Eq.~\ref{BIGFEchi} for sufficiently many $\chi$.
\end{enumerate}
We will present a similar construction for representations of level
zero.  In light of Cor.~\ref{FEcor}, such a family $\set{\Pi_\FS}$
is sufficient to establish the Jacquet-Langlands correspondence.

The strategy for producing $\Pi_\FS$ is as follows:  We will first
define an certain order $\LL_\FS\subset M_2(F)\times B$. The
required representation $\Pi_\FS$ will be induced from a certain
representation of $\LL_\FS^{\times}$.  In this section we construct
the orders $\LL_\FS$ and gather some geometric properties in
preparation for proving the properties listed above.

\subsection{Geometric preparations: $M_2(F)$ and $B$}
\label{geomprep}

Let $E/F$ be a separable quadratic extension field of ramification
degree $e$.  Let $\OO_E$ be its ring of integers, $\gp_E$ its
maximal ideal, $k_E$ its quotient field and $\sigma$ the nontrivial
element of $\Gal(E/F)$.

Let $A$ be the ring $M_2(F)$ or $B$.  Define an order
$\mathfrak{A}\subset A$ as follows:  if $A=M_2(F)$, let
$\mathfrak{A}$ be the chain order equal to the endomorphism ring of
the lattice chain $\set{\gp_E^i}$, as in Section~\ref{chainorders}.
If $A=B$, let $\mathfrak{A}=\OO_B$. Either way, we may identify
$\OO_E$ with an $\OO_F$-subalgebra of $\mathfrak{A}$ in such a way
that $\mathfrak{A}\cap E = \OO_E$.



There is a nondegenerate pairing $A\times A\to F$ given by
$(x,y)\mapsto \tr_{A/F}(xy)$.   Let $C$ be the complement of
$E$ in $A$ with respect to this pairing, so that $A=E\oplus
C$.  Let $s_A\from A\to E$ be the projection onto the first
factor.   Note that both the space $C$ and the map $s_A$ are
stable under multiplication by $E$ on either side. $C$ is a
(left and right!) $E$-vector space of dimension 1. It satisfies the
property that $\alpha v = v\alpha^\sigma$ for all $v\in C$,
$\alpha\in E$.  Let $\mathfrak{C}=\mathfrak{A}\cap C$.
\begin{lemma}\label{AA}  We have
$$\mathfrak{C}\mathfrak{C}=\begin{cases} \gp_E,&\text{$E/F$ unramified and $A=B$}\\
\OO_E,&\text{ all other cases.}\end{cases}$$
\end{lemma}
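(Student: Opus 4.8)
The plan is to reduce the whole statement to computing a single integer: the position of the $\OO_E$-lattice $\mathfrak{C}=\mathfrak{A}\cap C$ inside the $E$-line $C$. Fix $j\in C$ with $j\neq 0$, so that $C=Ej=jE$. Since $j^2$ commutes with every $\alpha\in E$ it lies in the centralizer of $E$, which is $E$, and $j\cdot j^2=(j^2)^\sigma j$ forces $(j^2)^\sigma=j^2$, so $j^2\in F$; moreover $j^2\neq 0$, since for $0\neq v=\alpha j$ one has $vC=\alpha j^2 E=E$, so $v$ is invertible, whereas $v^2=N_{E/F}(\alpha)j^2=0$ would make $v$ nilpotent. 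Thus $j^2\in F^\times$ and $(\alpha j)(\beta j)=\alpha\beta^\sigma j^2$, whence for powers of $\gp_E$ (which are $\sigma$-stable) $(\gp_E^{a}j)(\gp_E^{b}j)=\gp_E^{a+b}j^2=\gp_E^{\,a+b+e\,v_F(j^2)}$, where $v_F$, $v_E$ are the normalized valuations and I use $v_E|_F=e\,v_F$. On the other hand $\mathfrak{C}$ is an $\OO_E$-submodule of $C$ (for $\alpha\in\OO_E$, $v\in\mathfrak{C}$ one has $\alpha v=v\alpha^\sigma\in\mathfrak{A}\cap C$) and a full $\OO_F$-lattice there since $\mathfrak{A}$ is open in $A$; hence $\mathfrak{C}=\gp_E^{\,m}j$ for a unique $m\in\Z$, and therefore $\mathfrak{C}\mathfrak{C}=\gp_E^{\,2m+e\,v_F(j^2)}$. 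It remains to evaluate this exponent, which is independent of the choice of $j$ (replacing $j$ by $\gamma j$ changes $m$ by $-v_E(\gamma)$ and $v_F(j^2)$ by $v_F(N_{E/F}(\gamma))=f\,v_E(\gamma)$, and $ef=2$), so I may pick $j$ conveniently in each case.

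For $A=M_2(F)$, realize $A=\operatorname{End}_F(E)$ with $E$ acting by multiplication, so that $\mathfrak{A}=\{a:a\gp_E^{i}\subseteq\gp_E^{i}\text{ for all }i\}$ and $\OO_E\subset\mathfrak{A}$ is the given embedding. Take $j=\sigma$, the nontrivial element of $\Gal(E/F)$ regarded as an $F$-linear automorphism of $E$: the relation $\alpha\sigma=\sigma\alpha^\sigma$ puts $\sigma\in C$, and $\sigma^2=1$, $\sigma(\gp_E^{i})=\gp_E^{i}$. Hence $x\sigma\in\mathfrak{A}$ iff $x\gp_E^{i}\subseteq\gp_E^{i}$ for all $i$ iff $x\in\OO_E$, so $\mathfrak{C}=\OO_E\sigma$, $m=0$, $v_F(j^2)=0$, and $\mathfrak{C}\mathfrak{C}=\OO_E$, whether $E/F$ is ramified or not.

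For $A=B$, use the valuation $w_B=\tfrac12\,v_F\circ N_{B/F}$ of the division algebra $B$, for which $\OO_B=\{w_B\geq 0\}$. Because $C$ is the orthocomplement of $E$ under $(x,y)\mapsto\tr_{B/F}(xy)$, every $v\in C$ has $\tr_{B/F}(v)=0$, hence $v^2=-N_{B/F}(v)\in F$; comparing with $v^2=N_{E/F}(\alpha)j^2$ for $v=\alpha j$ gives $w_B(\alpha j)=\tfrac12\bigl(f\,v_E(\alpha)+v_F(j^2)\bigr)$, so $\mathfrak{C}=\{\alpha j:\ f\,v_E(\alpha)\geq -v_F(j^2)\}$. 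If $E/F$ is ramified ($e=2$, $f=1$) the condition is $v_E(\alpha)\geq -v_F(j^2)$, so $m=-v_F(j^2)$ and $2m+e\,v_F(j^2)=0$, giving $\mathfrak{C}\mathfrak{C}=\OO_E$. If $E/F$ is unramified ($e=1$, $f=2$), I claim $v_F(j^2)$ is odd: if it were even we could rescale $j$ so that $j^2=u\in\OO_F^\times$, and since the norm $\OO_E^\times\to\OO_F^\times$ is surjective for an unramified extension we could write $u^{-1}=N_{E/F}(\alpha_0)$ with $\alpha_0\in\OO_E^\times$; then $v=\alpha_0 j\in C$ would satisfy $v^2=1$ while $v\notin F$ (as $v\neq 0$ and $E\cap C=0$), contradicting that $B$ is a division algebra. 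So $v_F(j^2)$ is odd; rescaling $j$ so that $v_F(j^2)=1$, the condition becomes $v_E(\alpha)\geq 0$, whence $m=0$ and $2m+e\,v_F(j^2)=1$, giving $\mathfrak{C}\mathfrak{C}=\gp_E$.

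The only step with real content is the parity assertion for unramified $E/F$ in $B$, and it is exactly there that the hypothesis ``division algebra'' rather than ``matrix algebra'' enters: the anisotropy of the unramified binary norm form of $B$ forces $v_F(j^2)$ to be odd, which is what produces $\gp_E$ in place of $\OO_E$. Everything else is lattice bookkeeping; the one routine point to verify is that the valuation description of $\OO_B\cap E$ used above is consistent with the identification $\mathfrak{A}\cap E=\OO_E$ fixed in the setup, which it is, since $w_B|_E=\tfrac12\,f\,v_E$ has ring of integers $\OO_E$.
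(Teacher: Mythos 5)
Your argument is correct, and it reaches the same two pivot points as the paper ($\sigma\in\mathfrak{C}$ with $\sigma^2=1$ for $A=M_2(F)$, and valuation considerations in $B$), but the execution in the quaternion case is genuinely different. The paper never pins down $\mathfrak{C}$ itself: it only observes that $\mathfrak{C}\mathfrak{C}$ is an ideal of $\OO_E$ and then exhibits a unit in it (ramified cases), or notes that every element of $E$ has even valuation in $B$, so that $\mathfrak{C}$ must contain a prime element $\pi_B$ and hence $\mathfrak{C}\mathfrak{C}=\OO_E\pi_B^2=\gp_E$ (unramified case in $B$). You instead compute $\mathfrak{C}$ exactly as $\gp_E^{m}j$ inside the $E$-line $C$ and evaluate the exponent $2m+e\,v_F(j^2)$; the key unramified fact, that $v_F(j^2)$ is odd, you extract from anisotropy (norm surjectivity for the unramified extension plus the absence of zero divisors in $B$) rather than from the parity of $v_B$ on $E$. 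This buys a sharper statement (an explicit description of $\mathfrak{C}$ in every case, which the paper only gets in the unramified quaternion case) at the cost of slightly more bookkeeping; the paper's route is shorter because it only needs the ideal $\mathfrak{C}\mathfrak{C}$, not the lattice $\mathfrak{C}$.

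One local blemish: your justification that $j^2\neq 0$ for an arbitrary nonzero $j\in C$ is circular as written — the identity $vC=\alpha j^2E=E$ already presupposes $j^2\in F^\times$, and if $j^2=0$ one only gets that every element of $C$ squares to zero, which is not by itself absurd without a further argument (e.g.\ that $Ej$ would then be a proper nonzero two-sided ideal of the simple algebra $A$). This does not damage the proof, because in the two places where you actually use the formula you choose $j$ with $j^2$ visibly nonzero: $j=\sigma$ with $\sigma^2=1$ in $M_2(F)$, and any nonzero $j$ in $B$, where invertibility is automatic. It would be cleaner either to restrict the general claim to those choices or to insert the simplicity (or division-algebra) argument for the nonvanishing.
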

\begin{proof}
Since elements of $E$ commute with
$\mathfrak{C}\mathfrak{C}$, we must have
$\mathfrak{C}\mathfrak{C}\subset E$;  since
$\mathfrak{C}\subset\mathfrak{A}$ this implies
$\mathfrak{C}\mathfrak{C}\subset
E\cap\mathfrak{A}=\OO_E$.  Thus
$\mathfrak{C}\mathfrak{C}$ is an $\OO_E$-submodule of
$\OO_E$;  {\em i.e.} it is an ideal of $\OO_E$.

If $A=M_2(F)$ then $\mathfrak{A}$ is the endomorphism ring of the
lattice chain $\set{\gp_E^i}$.  Consider the element
$\sigma\in\Gal(E/F)$:  this certainly preserves each $\gp_E^i$ and
therefore belongs to $\mathfrak{A}$.  For any $\alpha\in E$, we have
that $(\alpha\sigma)^2=\N_{E/F}(\alpha)$ belongs to the center
$F\subset M_2(F)$, but $\alpha\sigma$ does not itself belong to $F$,
implying that $\tr_{A/F}(\alpha\sigma)=0$ and therefore that
$\sigma\in C$.  So $\sigma\in C\cap
\mathfrak{A}=\mathfrak{C}$.  Consequently
$\mathfrak{C}\mathfrak{C}$ contains $\sigma^2=1$, whence
it is the unit ideal.

Now suppose $A=B$.  Let $v_B\from B^{\times}\to \Z$ denote the valuation on
$B$.  If $E/F$ is ramified, then a uniformizer $\pi_E$ of $E$ has
$v_B(\pi_E)=1$, so that if $x\in\mathfrak{C}$ has valuation
$n$, then $\pi_E^{-n}x\in\mathfrak{C}$ is a unit.  This
implies that $\mathfrak{C}\mathfrak{C}$ is the unit
ideal.

On the other hand if $E/F$ is unramified, then every element of $E$ has even
valuation in $B$.  Considering that $A=E\oplus C$, this means
that $\mathfrak{C}$ contains an element $\pi_B$ of valuation
1, so that $\mathfrak{C}=\OO_E\pi_B$.  Then
$\mathfrak{C}\mathfrak{C}=\OO_E\pi_B^2=\gp_E$ as
required.
\end{proof}

Now suppose that $\FS=(\mathfrak{A},n,\alpha)$ is a simple stratum
in $A$ with $E=F(\alpha)$.  Choose an additive character $\nu$ of
$E$ vanishing on $\gp_E^{n+1}$ but not on $\gp_E^n$. Assume that
$\nu=\nu^\sigma$ if $e=1$.  Then define a character $\nu_\FS$ of $A$
by $\nu_\FS(x)=\nu(s_A(x))$.

Whenever $W$ is an $\OO_E$-stable subspace of $A$, we may define the
annihilator of $W$ with respect to $\nu_\FS$:  $$W^*=\set{x\in
A\;\vert\;\nu_\FS(xW)=1};$$ then $W^*$ is also an $\OO_E$-module.
Note that $(\gp_E^kW)^*=\gp_E^{-k}W^*$.

\begin{lemma}
\label{Aperp} The $\OO_E$-module
$\mathfrak{C}^*$ equals $E\oplus
\gp_E^n\mathfrak{C}$ if $E/F$ is unramified and $A=B$.  It equals  $E\oplus
\gp_E^{n+1}\mathfrak{C}$ in all other cases.
\end{lemma}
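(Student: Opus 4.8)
The plan is to compute $\mathfrak{C}^*$ directly from the definition by exploiting the decomposition $A=E\oplus\mathfrak{C}\cdot(\text{stuff})$ and the behavior of the projection $s_A$. First I would observe that since $\nu_\FS(x)=\nu(s_A(x))$ and $s_A$ is the projection onto $E$ along $C$, an element $x\in A$ annihilates $\mathfrak{C}$ under $\nu_\FS$ precisely when $\nu(s_A(x\mathfrak{C}))=1$. Writing $x=x_E+x_C$ with $x_E\in E$ and $x_C\in C$, and using that $C$ is stable under left and right multiplication by $E$, one sees that for $c\in\mathfrak{C}\subset C$ we have $x_E c\in C$ (so it contributes nothing to $s_A$) while $x_C c\in CC\subset E$. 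Hence $s_A(xc)=x_C c$, and the condition becomes $\nu(x_C\mathfrak{C})=1$, i.e. $x_C\in(\mathfrak{C})^*\cap C$ computed inside the $E$-line $C$. This shows $\mathfrak{C}^*=E\oplus(\mathfrak{C}^*\cap C)$, so the whole problem reduces to identifying $\mathfrak{C}^*\cap C$ as an $\OO_E$-submodule of the rank-one $E$-module $C$.

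Next I would pin down $\mathfrak{C}^*\cap C$ using Lemma~\ref{AA}. Fix a generator of $\mathfrak{C}$ as an $\OO_E$-module, say $c_0$, so $\mathfrak{C}=\OO_E c_0$; then $\mathfrak{C}\mathfrak{C}=\OO_E c_0^2$, which equals $\OO_E$ in all cases except $E/F$ unramified with $A=B$, where it equals $\gp_E$. For $x_C=\lambda c_0$ with $\lambda\in E$, the pairing condition $\nu(x_C\mathfrak{C})=\nu(\lambda\,\mathfrak{C}\mathfrak{C})=1$ says $\lambda\cdot(c_0^2\OO_E)\subset\ker\nu=\gp_E^{n+1}$. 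Since $\nu$ has level exactly $n$ (vanishes on $\gp_E^{n+1}$ but not $\gp_E^n$), this is equivalent to $\lambda c_0^2\in\gp_E^{n+1}$, i.e. $\lambda\in\gp_E^{n+1}(c_0^2)^{-1}$. In the generic case $c_0^2\OO_E=\OO_E$ this gives $\lambda\in\gp_E^{n+1}$, hence $x_C\in\gp_E^{n+1}c_0=\gp_E^{n+1}\mathfrak{C}$. In the exceptional case $c_0^2\OO_E=\gp_E$ we get $\lambda\in\gp_E^{n}$, hence $x_C\in\gp_E^{n}c_0=\gp_E^{n}\mathfrak{C}$. Combining with the first paragraph yields exactly the two claimed formulas.

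The one point that requires care — and which I expect to be the main (though minor) obstacle — is the hypothesis ``$\nu=\nu^\sigma$ if $e=1$'' and the compatibility of the annihilator with the two-sided $E$-action on $C$: one needs to know that for $c\in\mathfrak{C}$ and $\lambda\in E$, the elements $\lambda c$ and $c\lambda$ both lie in $\mathfrak{C}$ and that $\nu$ ``sees'' them symmetrically. This is where the identity $\lambda v = v\lambda^\sigma$ for $v\in C$ enters: it guarantees that $\OO_E c_0$ is a two-sided $\OO_E$-module and that the left-ideal condition and right-ideal condition defining $\mathfrak{C}^*$ coincide, using $\nu^\sigma=\nu$ in the unramified case. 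I would check that the valuation computation $v_E(\lambda c_0^2)=v_E(\lambda)+v_E(c_0^2)$ is insensitive to this twist since $c_0^2\in E$ already. Everything else — that $s_A$ is $E$-bilinear, that $CC\subset E$, that $E\cdot C\subset C$ — is recorded in Section~\ref{geomprep} and in Lemma~\ref{AA}, so the argument is essentially a bookkeeping exercise once the reduction $\mathfrak{C}^*=E\oplus(\mathfrak{C}^*\cap C)$ is in place.
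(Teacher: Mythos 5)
Your argument is correct and follows the same route as the paper's proof: observe $E\subset\mathfrak{C}^*$, reduce to identifying the $C$-component of $\mathfrak{C}^*$ as $I\mathfrak{C}$ for a fractional ideal $I$ of $\OO_E$, and then read off $I$ from Lemma~\ref{AA} together with the fact that $\nu$ vanishes on $\gp_E^{n+1}$ but not on $\gp_E^n$. Your extra care about the two-sided $E$-action and the choice of generator $c_0$ only fleshes out details the paper leaves implicit, so there is nothing to change.
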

\begin{proof}   Certainly we have
$E\subset\mathfrak{C}^*$;  all that remains
is to find $\mathfrak{C}^*\cap
\mathfrak{C}$.  This last is an $\OO_E$-submodule of the free
rank-one $\OO_E$-module $\mathfrak{C}$, so that it equals
$I\mathfrak{C}$ for an ideal $I\subset\OO_E$.  For an element
$x\in \OO_E$ to belong to $I$ the condition is
$\nu_\FS(s_A(x\mathfrak{C}\mathfrak{C}))=\nu(I\mathfrak{C}\mathfrak{C})=1$.
The lemma now follows from Lemma~\ref{AA} and the definition of
$\nu$.
\end{proof}

For an integer $m\geq 1$, we define an $\OO_E$-submodule $V_A^m\subset\mathfrak{C}$ as
follows:
$$V_A^m=\begin{cases}
\gp_E^{\floor{m/2}}\mathfrak{C},&\text{ $A=B$ and $E/F$ unramified}\\
\gp_E^{\floor{(m+1)/2}}\mathfrak{C},&\text{ all other
cases.}\end{cases}$$ The next proposition shows that
$V_A^n\subset\mathfrak{C}$ is nearly a ``square root" of the
ideal $\gp_E^n$:


\begin{prop}  \label{vprop}  The module $V_A^n$ has the following properties:
\begin{enumerate}
\item $V_A^nV_A^n\subset \gp_E^n.$  More precisely, if $E/F$ is unramified then the
value of $V_A^nV_A^n$ is given by the following table:
\begin{center}
\begin{tabular}{c|cc}
& $n$ even & $n$ odd\\
\hline $A=M_2(F)$ & $\gp_E^n$ & $\gp_E^{n+1}$ \\
$A=B$ & $\gp_E^{n+1}$ & $\gp_E^n$
\end{tabular}
\end{center}
\item If $E/F$ is ramified, then $V_A^n=V_A^{n+1}$.
\item If $E/F$ is unramified, then the dimension of $V_A^n/V_A^{n+1}$ as a $k_E$-vector space is given by the following table:
\begin{center}
\begin{tabular}{c|cc}
 & $n$ even & $n$ odd\\
 \hline
 $A=M_2(F)$ & $1$ & $0$ \\
 $A=B$ & $0$ & $1$
\end{tabular}
\end{center}
\item With respect to the character $\nu_\FS$, we have $(V_A^n)^* = E\oplus  V_A^{n+1}$.
\end{enumerate}

\end{prop}

\begin{proof}  Claim (1) follows from Lemma~\ref{AA}.
For claim (2):  Since $E/F$ is ramified, $n$ must be odd by definition of simple
stratum; then $\floor{(n+1)/2}=\floor{((n+1)+1)/2}$. For claim (3),
assume $E/F$ is unramified. When $A=M_2(F)$ we have
$V_A^n=\gp_E^{\floor{(n+1)/2}}\mathfrak{C}$, so that there is
an isomorphism of  $k_E$-vector spaces $V_A^n/V_A^{n+1}\approx
\gp_E^{\floor{(n+1)/2}}/\gp_E^{\floor{(n+2)/2}}$, and this has
dimension 1 or 0 as $n$ is even or odd, respectively.   When
$A=M_2(F)$ we have $V_A^n = \gp_E^{\floor{n/2}}\mathfrak{C}$,
so that there is an isomorphism of $k_E$-vector spaces
$V_2^n/V_2^{n+1}=\gp_E^{\floor{n/2}}/\gp_E^{\floor{(n+1)/2}}$, and
this has dimension 0 or 1 as $n$ is even or odd, respectively.

Claim (4) follows directly from Lemma~\ref{Aperp}.
\end{proof}

\subsection{Congruence subgroups and cuspidal
representations}

Keeping the notations from the previous subsection, we let
\begin{eqnarray*}
H_\FS&=&1+\gp_E^n+V_A^n\\
H_\FS^1&=&1+\gp_E^n+V_A^{n+1}.
\end{eqnarray*}
These are subgroups of $\mathfrak{A}^{\times}$
because $V_A^n$ is an $\OO_E$-module and because $V_1^nV_1^n\subset
\gp_E^n$ by Prop.~\ref{vprop}.  Note the inclusions
$U^n_{\mathfrak{A}}\subset H_\FS^1\subset H_\FS\subset J_\FS$ and $H^1_\FS\subset U^{\floor{n/2}+1}_{\mathfrak{A}}$.

\begin{prop}  \label{HS}
For a representation $\Lambda\in C(\psi_\alpha,\mathfrak{A})$, we
have that $\Lambda\vert_{H_\FS}$ is irreducible.  Further, $\Lambda\vert_{H_\FS}$ is the unique irreducible representation of $H_\FS$ whose restriction to $H_\FS^1$ is a sum of copies of $\psi_\alpha\vert_{H_\FS^1}$.  
\end{prop}

\begin{proof}  If $E/F$ is ramified, the claims in the proposition are trivial, because $H_\FS=H^1_\FS$ and $\Lambda$ is a one-dimensional character.  If $E/F$ is unramified, then the same is true in the case that $A=M_2(F)$ and $n$ is odd, and as well in the case that $A=B$ and $n$ is even.

Therefore assume that $E/F$ is unramified, and that $A=M_2(F)$ and $n$ is even, or else that $A=B$ and and $n$ is odd.  Then $V_A^n/V_A^{n+1}$ is a $k_E$-module of dimension 1.  Let $\psi_\alpha^1$ denote the restriction of $\psi_\alpha$ to $H_\FS^1$.  We
have an exact sequence
$$1\to H_\FS^1/\ker\psi_\alpha^1\to H_\FS/\ker\psi_\alpha^1\to V^n_A/V_A^{n+1}\to 1$$
in which $H_\FS^1/\ker\psi_\alpha^1$ is the center.  Thus $H_\FS/\ker\psi_\alpha^1$ is
a discrete Heisenberg group.  By the discrete Stone-von Neumann
Theorem, there is a unique irreducible representation
$\tilde{\psi}_\alpha$ of $H_\FS$ lying over $\psi_\alpha^1$.

If $\Lambda\in C(\psi_\alpha,\mathfrak{A})$, then
$\Lambda\vert_{H_\FS}$ is a $q$-dimensional representation of
$H_\FS$ whose restriction to $H_\FS^1$ is a multiple of
$\psi_\alpha^1$.  By the uniqueness property of
$\tilde{\psi}_\alpha$, we must have
$\Lambda\vert_{H_\FS}=\tilde{\psi}_\alpha$.  The proposition
follows.
\end{proof}

\subsection{Linking Orders}  It is time to investigate the geometry
of the product algebra $M_2(F)\times B$.  It will be helpful to use
the abbreviations $A_1=M_2(F)$, $A_2=B$, $\mathbf{A}=M_2(F)\times
B$. Suppose $\FS=\FS_1=(\mathfrak{A}_1,n_1,\alpha_1)$ is a simple
stratum in $M_2(F)$. Choose an embedding $E=F(\alpha_1)\injects B$
and let $\alpha_2\in B^{\times}$ be the image of $\alpha_1$ so that
$\FS_2=(\mathfrak{A}_2,n_2,\alpha_2)$ is the simple stratum in $B$
which corresponds to $\FS$. Here $\mathfrak{A}_2=\OO_B$.  For
convenience of notation we set $n=n_1$. Let
$\mathfrak{A}=\mathfrak{A}_1\times\mathfrak{A}_2$ and let
$\Delta\from E\to \mathbf{A}$ be the diagonal map $\Delta(a)=(a,a)$.
We denote by $s_1$ and $s_2$ the projections $A_1\to E$,
$A_2\to E$, respectively.  Let $C_i$ be the complement of $E$ in $A_i$.

Let $\nu$ be an additive character of $E$ as in
Section~\ref{geomprep}. We define a character $\nu_{\FS}$ of
$\mathbf{A}$ by $$\nu_{\FS}(x_1,x_2)=\nu(s_1(x_1)-s_2(x_2)).$$

\begin{lemma}\label{Operp}
With respect to $\nu_{\FS}$, the annihilator of the diagonally
embedded subring $\Delta(\OO_E)\subset\mathfrak{A}$ is
$$\left(\Delta(\OO_E)\right)^* = \Delta(E)+\gp_E^{n+1}\times\gp_E^{n+1}+C_1\times
C_2.$$
\end{lemma}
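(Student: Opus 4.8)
The plan is to decompose everything along the direct sum $\mathbf{A} = (E\times E) \oplus (C_1\times C_2)$ and compute the annihilator of $\Delta(\OO_E)$ piece by piece, using that $\nu_\FS$ is built from $\nu$ on $E$ composed with the $E$-linear projections $s_1,s_2$. First I would record the key compatibilities: the character $\nu_\FS(x_1,x_2) = \nu(s_1(x_1)-s_2(x_2))$ kills $C_1\times C_2$ entirely, because $s_i$ vanishes on $C_i$; and on $E\times E$ it restricts to $(a,b)\mapsto \nu(a-b)$. So $C_1\times C_2 \subset (\Delta(\OO_E))^*$ is immediate, and it remains only to identify the $E\times E$-part of the annihilator, i.e.\ the set of $(a,b)\in E\times E$ with $\nu((a-c)-(b-c)) = \nu(a-b)=1$ for all $c\in\OO_E$ — wait, more carefully: $(a,b)$ annihilates $\Delta(\OO_E)$ iff $\nu_\FS((a,b)\cdot\Delta(c)) = \nu(ac - bc) = \nu((a-b)c) = 1$ for all $c\in\OO_E$. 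Since $\nu$ vanishes exactly on $\gp_E^{n+1}$, this holds iff $a-b\in\gp_E^{n+1}$. Thus the $E\times E$-part of $(\Delta(\OO_E))^*$ is $\{(a,b) : a-b\in\gp_E^{n+1}\} = \Delta(E) + (\gp_E^{n+1}\times\gp_E^{n+1})$, where I use that $(a,b) = (b,b) + (a-b,0)$ and then split $a-b = (a-b) + 0$ appropriately — actually the cleaner identity is $\{(a,b): a-b\in\gp_E^{n+1}\} = \Delta(E) + \gp_E^{n+1}\times 0 = \Delta(E)+\gp_E^{n+1}\times\gp_E^{n+1}$, the last equality because $\Delta(E)$ already contains $(\pi_E^{n+1}c,\pi_E^{n+1}c)$ for all $c$, so adding $\gp_E^{n+1}$ in the second slot adds nothing new beyond what $\Delta(E)+\gp_E^{n+1}\times 0$ gives.

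Assembling the two parts, I would then argue that the annihilator of a direct sum decomposes as the direct sum of annihilators \emph{provided} the pieces are mutually orthogonal under the pairing attached to $\nu_\FS$ and each piece is $\OO_E$-stable. Concretely: $\Delta(\OO_E)$ is an $\OO_E$-submodule of $E\times E$, it is orthogonal to $C_1\times C_2$ (since $\nu_\FS$ kills $C_1\times C_2$ on the nose, a fortiori it kills $(E\times E)\cdot(C_1\times C_2)$ — here one uses that $C_i$ is a two-sided $E$-module so $E\cdot C_i \subseteq C_i$), hence $(\Delta(\OO_E))^* = \big((\Delta(\OO_E))^*\cap (E\times E)\big) \oplus (C_1\times C_2)$. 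Plugging in the computation of the first summand gives exactly
$$(\Delta(\OO_E))^* = \Delta(E) + \gp_E^{n+1}\times\gp_E^{n+1} + C_1\times C_2,$$
as claimed.

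The only genuinely delicate point — and the one I would be most careful about — is the orthogonality claim that forces the direct-sum decomposition of the annihilator, rather than just an inclusion. One needs that an element $(x_1,x_2)\in\mathbf{A}$ annihilating $\Delta(\OO_E)$, when written as $(a,b) + (v_1,v_2)$ with $a,b\in E$ and $v_i\in C_i$, has each component annihilating $\Delta(\OO_E)$ separately. For the $C$-component this is automatic since $\nu_\FS$ is identically trivial on $C_1\times C_2$. For the $E$-component, it then follows by subtraction: $\nu_\FS((a,b)\Delta(c)) = \nu_\FS((x_1,x_2)\Delta(c)) \cdot \nu_\FS((v_1,v_2)\Delta(c))^{-1}$, and the second factor is $1$ because $(v_1,v_2)\Delta(c) = (v_1 c, v_2 c)\in C_1\times C_2$ by the $E$-stability of $C_i$ noted in Section~\ref{geomprep}. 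So no serious obstacle arises; the argument is a routine unwinding of definitions once the $E$-stability of the $C_i$ and the explicit kernel of $\nu$ are in hand. One should just be mindful that $C_i$ being stable under \emph{both} left and right multiplication by $E$ is what makes $\Delta(\OO_E)\cdot(C_1\times C_2)$ land back in $C_1\times C_2$, and that the relation $\alpha v = v\alpha^\sigma$ on $C_i$ is not actually needed here — only $E\cdot C_i\subseteq C_i$.
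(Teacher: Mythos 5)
Your proof is correct and follows essentially the same route as the paper: reduce via the splitting $\mathbf{A}=(E\times E)\oplus(C_1\times C_2)$ and the $E$-stability of the $C_i$ to the condition $\nu((s_1(x_1)-s_2(x_2))\OO_E)=1$, which holds exactly when $s_1(x_1)\equiv s_2(x_2)\pmod{\gp_E^{n+1}}$ since $\gp_E^{n+1}$ is the largest ideal on which $\nu$ is trivial. The paper's proof is just a terser version of this, leaving the orthogonality of $C_1\times C_2$ implicit, so your added care there is harmless and accurate.
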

\begin{proof} Suppose $(x_1,x_2)\in(\Delta(\OO_E))^*$;  then for
all $\beta\in \OO_E$, $v(\beta(s_1(x_1)-s_2(x_2)))=1$.
 This means exactly that $s(x_1)\equiv s(x_2)\pmod{\gp_E^{n+1}}$, so that the pair
$(s(x_1),s(x_2))$,  being equal to
$(s(x_1),s(x_1))+(0,s(x_2)-s(x_1))$, lies in
$\Delta(E)+\gp_E^{n+1}\times\gp_E^{n+1}$ as required.
\end{proof}

Let $\mathbf{V}^n =V_1^n\times V_2^n\subset\mathfrak{A}$.   The
following properties of $\mathbf{V}^n$ follow directly from
Prop.~\ref{vprop}:
\begin{prop} \label{vvprop} The module $\mathbf{V}^n$ has the
following properties:
\begin{enumerate}
\item $\mathbf{V}^n\mathbf{V}^n\subset \gp_E^n\times\gp_E^n.$
Furthermore, if $E/F$ is unramified then $\mathbf{V}^n\mathbf{V}^n$ equals
$\gp_E^n\times\gp_E^{n+1}$ or $\gp_E^{n+1}\times\gp_E^n$ as $n$ is
even or odd, respectively.
\item If $E/F$ is unramified, then $\mathbf{V}^n/\mathbf{V}^{n+1}$ is a left and right $k_E$-vector space of dimension 1, with the property that $\alpha v=v\alpha^q$ for $\alpha\in k_E$, $v\in\mathbf{V}^n/\mathbf{V}^{n+1}$.
\item If $E/F$ is ramified, then $\mathbf{V}^n=\mathbf{V}^{n+1}$.
\item With respect to $\psi_{\FS}$, the annihilator of $\mathbf{V}^n$ is $(E\times E)\oplus \mathbf{V}^{n+1}$.
\end{enumerate}
\end{prop}

\begin{defn}  The {\em linking order} $\LL_{\FS}$ is defined by $$\LL_{\FS}=\Delta(\OO_E)+\gp_E^n\times\gp_E^n+\mathbf{V}^n.$$
\end{defn}



Then $\LL_{\FS}$ is a (left and right) $\OO_E$-submodule of
$\mathfrak{A}$.   It is easy to check that $\LL_{\FS}$ is indeed an
order;  this is a consequence of item (1) of the previous paragraph.
We will also have use for a smaller subspace
$\LL_{\FS}^{\circ}\subset\LL_{\FS}$, defined by
$$\LL_{\FS}^{\circ}=\Delta(\gp_E)+\gp_E^{n+1}\times\gp_E^{n+1}+\mathbf{V}^{n+1}.$$

\begin{prop}
\label{linkingorders} The linking order $\LL_\FS$ has the following
properties:
\begin{enumerate}
\item The group $\LL_{\FS}^{\times}$ is normalized by $\Delta(E^{\times})$.
\item With respect to $\nu_{\FS}$, the annihilator of $\LL_{\FS}$ is $\LL_{\FS}^{\circ}$.
\item $\LL_{\FS}^{\circ}$ is a double-sided ideal of $\LL_{\FS}$.
\item If $E/F$ is ramified, then $\LL_{\FS}/\LL_{\FS}^{\circ}$ is a commutative ring of order $q^2$, isomorphic to $k[X]/(X^2)$.
\item If $E/F$ is unramified, then $\LL_{\FS}/\LL_{\FS}^{\circ}$ is a noncommutative ring of order $q^6$ whose isomorphism class depends only on $q$ (and not $n$).
\item $\LL_\FS^{\times}\cap \GL_2(F)=H_{\FS_1}$, and $\LL_\FS^{\times}\cap B^{\times}=H_{\FS_2}$.

\end{enumerate}
\end{prop}

\begin{proof}
Claim (1) is easy to check.   For claim (2), we calculate the
annihilator of $\LL_{\FS}$ as follows:
\begin{eqnarray*}
\LL_{\FS}^* &=& \left[\Delta(\OO_E)+\gp_E^n\times\gp_E^n+\mathbf{V}^n\right]^*\\
&=&
\Delta(\OO_E)^*\cap\left(\gp_E^n\times\gp_E^n\right)^*\cap\left(\mathbf{V}^n\right)^*
\end{eqnarray*}
The three terms to be intersected are
\begin{eqnarray*}
\Delta(\OO_E)^* &=&
\Delta(E)+\gp_E^{n+1}\times\gp_E^{n+1}+C_1\times C_2\circ,\text{ by Lemma~\ref{Operp}}\\
\left(\gp_E^n\times\gp_E^n\right)^* &=&
\gp_E\times\gp_E+C_1\times C_2^\circ\\
\left(\mathbf{V}^n\right)^*&=&(E\times E)\oplus
\mathbf{V}^{n+1},\text{ by Lemma~\ref{vvprop}}
\end{eqnarray*}
We claim the intersection is $\LL^{\circ}_{\FS}$. Indeed, for a pair
$(x_1,x_2)$ to lie in $\LL^*_{\FS}$, the first two equations imply $s_1(x_1),s_2(x_2)\in\gp_E$ and $s_1(x_1)\equiv
s_2(x_2)\pmod{\gp^{n+1}_E}$, and the third implies $(x_1-s_1(x_1),x_2-s_2(x_2))\in\mathbf{V}^{n+1}$.

Claim (3) follows from the inclusion
$\mathbf{V}^n\mathbf{V}^{n+1}\subset\gp_E^{n+1}\times\gp_E^{n+1}$,
which is easily checked.

For claims (4) and (5), let
$\mathcal{R}_{\FS}=\LL_{\FS}/\LL_{\FS}^{\circ}$.  Fix a uniformizer
$\pi_E$ of $E$.

In the case that $E/F$ is ramified, we have $\mathbf{V}^n=\mathbf{V}^{n+1}$, so there
is an isomorphism
$$\mathcal{R}_{\FS}\isom\frac{\Delta(\OO_E)+\gp^n_E\times\gp^n_E}{\Delta(\gp_E)+\gp_E^{n+1}\times\gp_E^{n+1}}.$$
The ``numerator" of the right-hand side is the ring of pairs
$(x,x+\pi_E^ny)\in \OO_E\times\OO_E$ with $x,y\in\OO_E$.  Define a
map
\begin{eqnarray*}
\mathcal{R}_{\FS}&\to& k\times k\\
(x,x+\pi_E^ny)&\mapsto& (\overline{x},\overline{y}),
\end{eqnarray*}
where if $z\in\OO_E$ we have put $\overline{z}=z\pmod{\gp_E}$.  It
is easily checked that this map is an isomorphism of (additive)
groups;  the multiplication law induced on $k\times k$ is
$(x_1,y_1)(x_2,y_2)=(x_1x_2,x_1y_2+x_2y_1)$, which is to say that
$\mathcal{R}_{\FS}\isom k[X]/(X^2)$.

Now suppose $E/F$ is unramified.  In this case $V=\mathbf{V}^n/\mathbf{V}^{n+1}$
is a vector space over $k_E$ of dimension 1.  We have
$\mathbf{V}^n\mathbf{V}^n\subset\gp_E^n\times\gp_E^n$.  On the other
hand the image of $\gp_E^n\times\gp_E^n$ in $\mathcal{R}_{\FS}$ may
be identified with $k_E$ via $(x_1,x_2)\mapsto
\overline{\pi_E^{-n}(x_1-x_2)}$.  For $v,w\in V$, let $v\cdot w$ be the
image of $vw\in \gp_E^n\times\gp_E^n$ under this latter map.  Then
$(v,w)\mapsto v\cdot w$ is a pairing $V\times V\to k_E$ which is
$k_E$-linear in the first variable and satisfies $w\cdot v=(v\cdot
w)^q$.  This pairing is nondegenerate by part (1) of
Lemma~\ref{vvprop}:  One of the factors of
$\mathbf{V}^n\mathbf{V}^n$ is always $\gp_E^n$.  Choose an isomorphism $\phi\from V\to k_E$ of $k_E$ vector spaces in such a way that $v\cdot w=\phi(v)\phi(w)^q$.

We are now ready to describe the ring $\mathcal{R}_\FS$:  let $R$ be the $k$-algebra of matrices $$[\alpha,\beta,\gamma]=\left(\begin{matrix} \alpha & \beta & \gamma  \\ & \alpha^q & \beta^q \\ & & \alpha\end{matrix}\right),$$  where $\alpha,\beta,\gamma\in k_E$.
Any element of $\LL_\FS$ is of the form $(x,x+\pi_E^n y)+v$, where
$x,y\in\OO_E$ and $v\in\mathbf{V}^n$. Define a map
\begin{eqnarray*}
\LL_\FS &\to& R\\
(x,x+\pi^n_E y)+v &\mapsto& [\overline{x}, \overline{y}, \phi(v)];
\end{eqnarray*}
it is easy to see that this map descends to a ring isomorphism $\mathcal{R}_\FS\to R$.  Therefore $\mathcal{R}_\FS$ is a
noncommutative ring of order $q^6$ whose isomorphism class is independent of $n$.

For claim (6), we begin with the fact that any element $b$ of
$\LL_\FS^{\times}$ is of the form $(x+\pi^n y,x)+v$, with $x\in\OO_E^{\times}$,
$y\in\OO_E$, and $v\in
\mathbf{V}^n=\mathbf{V}_1^n\times\mathbf{V}_2^n$. If such an element
has $B$-component 1 we must have $x=1$ and $v=(v_1,0)$, which is to
say that $b=(1+\pi^n y,1)+(v_1,0)\in
\left(1+\gp_E^n+V^n\right)\times \set{1}$ is an element of $H_\FS$.
The argument for $B^{\times}$ is similar.
\end{proof}

In the sequel, we will construct a representation $\rho_\FS$ of the
unit group $\LL_{\FS}^{\times}$ inflated from a representation of the
finite group $(\LL_{\FS}/\LL_{\FS}^{\circ})^{\times}$.  Then when
$\rho_\FS$ is extended to $\Delta(E^\times)(F^\times\times F^\times)\LL^\times$ and induced up to $\GL_2(F)\times B^{\times}$, the result will
realize the Jacquet-Langlands correspondence for representations of
$\GL_2(F)$ containing the stratum $\FS$.  For completeness' sake, we
also want to construct the correspondence for supercuspidal
representations of level 0.  To this end we define the linking order
of level 0 by
$$\LL_0=M_2(\OO_F)\times \OO_B$$
and its double-sided ideal by
$$\LL_0^{\circ}=\gp_FM_2(\OO_F)\times\gP_B.$$
Let $E$ be the unique unramified quadratic extension of $F$ and
choose embeddings $E\injects M_2(F)$, $E\injects B$ so that
$M_2(\OO_F)\cap E=\OO_B\cap E=\OO_E$.  Let $s_1\from M_2(\OO_F)\to
E$ and $s_2\from B\to E$ be the projections as in the previous
section, let $\nu$ be an additive character of $E$ vanishing on
$\gp_E$ but not on $\OO_E$, and let $\nu_0\from \mathbf{A}\to\C^{\times}$
be the character $\nu_0(x_1,y_1)=\nu(s_1(x_1)-s_2(y_1))$.  Then
Prop.~\ref{linkingorders} has the following analogue in level zero:
\begin{prop}
\label{linkingorderslevelzero}  The linking order $\LL_0$ has the
following properties:
\begin{enumerate}
\item $\LL_0^{\times}$ is normalized by $\Delta(E^{\times})$.
\item With respect to $\nu_0$, the annihilator of $\LL_0$ is
$\LL_0^\circ$.
\item $\LL_0/\LL_0^{\circ}\isom M_2(k_F)\times k_E$.
\item $\LL_0^{\times}\cap \GL_2(F)=\GL_2(\OO_F)$, and $\LL_0^{\times}\cap
B^{\times}=\OO_B^{\times}$.
\end{enumerate}
\end{prop}

\section{Representations of $\LL_{\FS}^{\times}$ and the Fourier
transform.}\label{repsofb}

Keep the notations from the previous section:  Let
$\FS=(\mathfrak{A}_1,n_1,\alpha_1)$ be a simple stratum in
$\GL_2(F)$, let $\FS'=(\mathfrak{A}_2,n_2,\alpha_2)$ be its
corresponding simple stratum in $B^{\times}$, let $n=n_1$, let $\LL_\FS$ be
the associated linking order, let $\mathcal{R}_\FS$ be its quotient
ring by the ideal $\LL_\FS^\circ$, and let $\nu_\FS$ be the
associated additive character on $\mathbf{A}=M_2(F)\times B$. Let
$\mathbf{G}=\GL_2(F)\times B^{\times}$. For $g=(g_1,g_2)\in\mathbf{G}$,
write $$\norm{g}=\abs{\det g_1}_F\abs{\N g_2}_F.$$

We let $\mu_{\FS}$ be the unique Haar measure on the additive group
$\mathbf{A}$ which is self-dual with respect to $\nu_{\FS}$, and let
$\CF_\FS$ be the Fourier transform with respect to $\psi_{\FS}$:
$$\CF_\FS f(y)=\int_{\mathbf{A}} f(x)\nu_\FS(xy)d\mu_\FS(x).$$
There are translation operators $L,R\from\mathbf{G}\to\Aut
C^\infty_c(\mathbf{G})$, defined by $L_gf(y)=f(g^{-1}y)$ and
$R_hf(y)=f(yh)$;  we have the rules
\begin{equation}
\label{FTproperty} L_g \CF_\FS = \norm{g}^{2}\CF_\FS R_g,\;\;R_h
\CF_\FS =\norm{h}^{-2}\CF_\FS L_h.
\end{equation}

Let $\mathcal{R}_\FS$ be the $k_E$-algebra
$\LL_{\FS}/\LL_{\FS}^{\circ}$ as in the proof of
Prop.~\ref{linkingorders}.

\begin{prop} \label{measure} The measure of $\LL_{\FS}^{\circ}$ with respect to $\mu_{\FS}$ is $\#\mathcal{R}_\FS^{-1/2}$.
\end{prop}
\begin{proof}  Let $\chi_{\LL_{\FS}}$ be the characteristic function of $\LL_{\FS}$.  Then
$$\CF_\FS\chi_{\LL_{\FS}}(y)=\int_{\LL_{\FS}} \nu_{\FS}(xy)\;d\mu_{\FS}(x)$$
is supported on $\LL_{\FS}^\perp =\LL_{\FS}^{\circ}$ and equals
$\mu_{\FS}(\LL_{\FS})$ there;  {\em i.e.}
$\CF_\FS\chi_{\LL_{\FS}}=\mu_{\FS}(\LL_{\FS})\chi_{\LL_{\FS}^{\circ}}$.
Similarly
$\CF_\FS^2\chi_{\LL_\FS}=\mu_{\FS}(\LL_{\FS})\mu_{\FS}(\LL_{\FS}^{\circ})\chi_{\LL_{\FS}}$.
On the other hand, since $\mu_{\FS}$ is self-dual, we must have
$\CF_\FS^2\chi_{\LL_{\FS}}=\chi_{\LL_{\FS}}$, implying
$\mu_{\FS}(\LL_{\FS})\mu_{\FS}(\LL_{\FS}^{\circ})=1$.  Since
$\mu_{\FS}(\LL_{\FS})=\# \mathcal{R}_\FS\mu_{\FS}(\LL_{\FS}^{\circ})$,
the result follows.\end{proof}

Let $\mathcal{C}(\mathcal{R}_\FS)$ be the space of complex-valued
functions on $\mathcal{R}_\FS$.  Note that the character $\nu_{\FS}$
vanishes on $\LL_{\FS}^{\circ}$ and therefore induces a well-defined
additive character of $\mathcal{R}_\FS$.   We identify
$\mathcal{C}(\mathcal{R}_\FS)$ with a subspace of
$C^\infty_c(\mathbf{A})$.

Prop.~\ref{measure} together with the key property that $\mathcal{L}_\FS$ and $\mathcal{L}_\FS^\circ$ are dual lattices imply the following:

\begin{prop}  The Fourier transform $f\mapsto\CF_\FS f$ preserves the space $\mathcal{C}(\mathcal{R}_\FS)$.
For $f\in\mathcal{C}(\mathcal{R}_\FS)$, we have
\begin{equation}
\label{DFT} \CF_\FS{f}(y)=\#\mathcal{R}_\FS^{-1/2}\sum_{x\in
\mathcal{R}_\FS} f(x)\nu_{\FS}(xy).
\end{equation}
\end{prop}


Recall that the data of $\FS$ and $\FS'$ determine characters
$\psi_{\alpha_1}$ and $\psi_{\alpha_2}$ of the subgroups
$U_{\mathfrak{A}_1}^{n_1}$ and $U_{\mathfrak{A}_2}^{n_2}$ of
$\mathfrak{A}_1^{\times}$ and $\mathfrak{A}_2^{\times}$, respectively.  The
product group $U_{\mathfrak{A}_1}^{n_1}\times
U_{\mathfrak{A}_2}^{n_2}=1+\gp_E^n\mathfrak{A}_1\times\gp_E^n\mathfrak{A}_2$
is a subgroup of $\LL_\FS^{\times}$, and the product character
$\psi_\FS=\psi_{\alpha_1}\times\psi_{\alpha_2}^{-1}$ vanishes on
$\left(U_{\mathfrak{A}_1}^{n_1}\times
U_{\mathfrak{A}_2}^{n_2}\right)\cap
(1+\LL_\FS^\circ)=U_{\mathfrak{A}_1}^{n+1}\times
U_{\mathfrak{A}_2}^{n+1}$. Therefore if we let $\mathbf{U}_\FS$ be
the image of $U_{\mathfrak{A}_1}^{n_1}\times
U_{\mathfrak{A}_2}^{n_2}$ in $\mathcal{R}_\FS$, then $\psi_\FS$
induces a well-defined nontrivial character of $\mathbf{U}_\FS$.

We are now ready to construct the special representation
$\rho_{\FS}$.  Its relevant properties are as follows:

\begin{Theorem}  \label{rhonu} There exists an irreducible representation $\rho_{\FS}$ of $\mathcal{R}_\FS^{\times}$ satisfying the conditions:
\begin{enumerate}
\item $\rho_{\FS}$ vanishes on $k^{\times}\subset \mathcal{R}_{\FS}^{\times}$.
\item $\rho_{\FS}\vert_{\mathbf{U}_\FS}$ is a sum of copies of $\psi_\FS$.
\item If $f\in\mathcal{C}(\rho_\FS)$ is a matrix coefficient, then $\CF_\FS
f$ is supported on $\mathcal{R}_\FS^{\times}$ and satisfies $\CF_\FS
f(y)=\pm f(y^{-1})$, all $y\in\mathcal{R}_\FS^{\times}$.  The sign is 1 if $E/F$ is ramified and $-1$ otherwise.
\end{enumerate}
\end{Theorem}

\begin{rmk}
These three properties correspond to the three desired properties of
the representation $\Pi_\FS$ listed at the beginning of
Section~\ref{linkingorderssection}.
\end{rmk}
\begin{proof}
First, consider the case where $E=F(\alpha)$ is a ramified extension
of $F$.  Then by Prop.~\ref{linkingorders} we have an isomorphism
$\mathcal{R}_\FS\isom k[X]/(X^2)$ with respect to which $\nu_\FS$ is a
nontrivial additive character which vanishes on $k\subset\mathcal{R}_\FS$.
The subgroup $\mathbf{U}_\FS\subset\mathcal{R}^{\times}_\FS$ corresponds to
$\set{1+aX\;\vert\; a\in k}$.   There is obviously a unique
character $\rho_\FS$ of $\mathcal{R}^{\times}_\FS$ lifting $\psi_\FS$ and vanishing
on $k^{\times}$.  It takes the form $$\rho_\FS(a+bX)=\Psi(a^{-1}b),$$ where
$\Psi\from k\to\C^{\times}$ is a nontrivial character determined by $\psi_\FS$.   That $\rho_\FS$ satisfies claim (3) is a simple calculation in the commutative ring $\mathcal{R}_\FS$.



The case of $e=1$ is far more subtle.  The required representation
$\rho_\FS$ is related to the construction of the Weil representation
of a symplectic group over a finite field.  We present a self-contained version of the construction in the following section.\end{proof}

\subsection{Fourier transforms on the Heisenberg group.}

In this section, $k$ is the finite field with $q$ elements and $k_2/k$ is a quadratic field extension.  As in the proof of Prop.~\ref{linkingorders}, let $R$ be the $k$-algebra of matrices of the form $$[\alpha,\beta,\gamma]=\left(\begin{matrix} \alpha & \beta & \gamma  \\ & \alpha^q & \beta^q \\ & & \alpha\end{matrix}\right),$$  where $\alpha,\beta,\gamma\in k_2$.   Let $U\subset R^\times$ be the subgroup of matrices of the form $[1,0,\gamma]$, and let $U^1\subset U$ be the subgroup consisting of those $[1,0,\gamma]$ for which $\tr_{k_2/k}\gamma=0$.   Note that the center of $R^\times$ is $k^\times U$.

Let $\ell$ be a prime not dividing $q$, and let $\nu_k\from k\to \overline{\Q}_\ell^\times$ be a nontrivial additive character.  Define an additive character $\nu_R$ of $R$ by $\nu_R([\alpha,\beta,\gamma])=\nu_k(\tr_{k_2/k}\gamma)$.  Let $\mathcal{F}$ be the Fourier transform with respect to $\nu_R$.

\begin{Theorem}\label{rhoconstruction}
For each character $\psi$ of $U$ which is nontrivial on $U^1$, there exists a representation $\rho_\psi$ of $R^\times$ satisfying the properties:
\begin{enumerate}
\item $\rho_\psi$ is trivial on $k^\times$.
\item $\rho_\psi\vert_U$ is a multiple of $\psi$.
\item For a matrix coefficient $f\in \mathcal{C}(\rho_\psi)$, the Fourier transform $\mathcal{F}f$ is supported on $R^\times$ and satisfies $\mathcal{F}f(y)=-f(y^{-1})$ for $y\in R^\times$.
\end{enumerate}
\end{Theorem}

The proof will occupy the rest of the section.   To construct $\rho_\psi$, we will build a nonsingular projective curve $X/\overline{k}$ admitting an action of $R^\times$, and find $\rho$ in the $\ell$-adic cohomology of $X$.

First, we recognize a relationship between $R^\times$ and the unitary group $\GU_3$.  Let $\Phi$ be the matrix
$$\Phi=\left(\begin{matrix}  & & 1\\ & -1 & \\ 1& &  \end{matrix}\right),$$
and let $\GU_3(k)$ be the subgroup of matrices $M\in \GL_3(k_2)$ satisfying $M^*\Phi M=\lambda(M)\Phi$ for a scalar $\lambda(M)$.  (Here $M^*$ is the conjugate transpose of $M$.)  Then a large part of the Borel subgroup of $\GU_3(k)$ is contained in $R^\times$.  Indeed, if $M\in R^\times$, we can measure the defect of $M$ from lying in $\GU_3(k)$ by a homomorphism $\delta\from R^\times\to k$ defined by
\begin{equation}
\Phi^{-1}M^*\Phi M = \lambda(M)\left(\begin{matrix} 1 & & \delta(M) \\ & 1 & \\ & & 1\end{matrix}\right).
\end{equation}
Explicitly, $\delta([\alpha,\beta,\gamma])=\alpha\gamma^q+\alpha^q\gamma-\beta^{q+1}$.
Let $R^1=\ker \delta$;  then $R^1\subset \GU_3(\F_q)$.

The algebraic group $\GU_3$ acts on the projective plane $\mathbf{P}^2_k$ in the usual manner;  the group $\GU_3(k)$ preserves the equation $y^{q+1}=x^qz+xz^q$ in projective coordinates.  This equation defines a nonsingular projective curve $X^1$ of genus $q(q-1)/2$ with an action of $\GU_3(k)$.  Let $X=R^\times \times_{R^1} X^1$;  this is a smooth projective curve with an action of $R^\times$.  Let $\ell$ be a prime distinct from the characteristic of $k$, and let $\rho\from R^\times\to H^1(X,\overline{\Q}_\ell)$ be the representation of $R^\times$ on the first cohomology of $X$.   The degree of $\rho$ is $q^2(q-1)$.    Note that $\rho$ is trivial on $k^\times\subset R^\times$.

Since $U$ lies in the center of $R^1$, we have a decomposition $\rho=\oplus_\psi \rho_\psi$ of $\rho$ into its irreducible $\psi$-isotypic components, where $\psi$ runs over characters of $U$ which are nontrivial on $U^1$;  each has dimension $q$.   We claim that $\rho_\psi$ is irreducible.  By the discrete Stone von-Neumann theorem there is a unique irreducible representation $\varsigma$ of the $p$-Sylow subgroup $H\subset R^\times$ which lies over $\psi$, and furthermore $\deg \varsigma=q$.  Since the restriction of $\rho_\psi$ to $H$ lies over $\psi$ and has degree $q$, it must agree with $\varsigma$.  Therefore $\rho_\psi$ is irreducible.

Let $T\subset R^\times$ be the subgroup of diagonal matrices, so that $T\isom k_2^*$.  The Lefshetz fixed-point theorem can easily be used to compute the restriction of $\rho_\psi$ to $T$:

\begin{prop}\label{multone1} The restriction of $\rho_\psi$ to $T$ is exactly the direct sum of those characters $\chi$ of $T$ which are nontrivial on $T/k^\times$.
\end{prop}

For a matrix coefficient $f\in\mathcal{C}(\rho_\psi)$, we consider the Fourier transform $\mathcal{F}f$.  We claim that the Fourier transform $\mathcal{F}f$ is supported on $R^\times$.  Indeed, if $y\in R$ is not invertible then $uy=y$ for all $u\in U$.  It follows from this that $\mathcal{F}f(uy)=\mathcal{F}f(uy)=\psi(u)^{-1}\mathcal{F}f(y)$ for all $u\in U$;  since $\psi$ is nontrivial we see that $\mathcal{F}f(y)=0$.

Next we claim that for $y\in R^\times$ we have 
\begin{equation}\label{HeisenbergFE}
\mathcal{F}f(y)=-f(y^{-1}).
\end{equation}
Formally, we have $\mathcal{F}f(y)=f(y^{-1})\mathcal{F}(1)$, so in fact is suffices to show that 
\begin{equation}\label{HeisenbergFE1}
\mathcal{F}f(1)=-f(1).
\end{equation}
It is enough to prove Eq.~\ref{HeisenbergFE1} in the case that $f$ equals the character of $\rho_\chi$.  This is because the character of $\rho_\psi$ generates $\mathcal{C}(\rho_\psi)$ as an $(R^\times\times R^\times)$-module, and because the property in Eq.~\ref{HeisenbergFE} is invariant when we replace $f$ by any of its $(R^\times\times R^\times)$-translates.   Therefore let $f=\tr\rho_\psi$ be the character of $\rho_\psi$.

We have 
$$\mathcal{F}f(1)=\frac{1}{q^3}\sum_{x\in R^\times}\tr \rho_\psi(x)\nu_R(x).$$
We observe that the term $\rho_\psi(x)\nu_R(x)$ only depends on the conjugacy class of $x$ in $R^\times$.  We first dispense with those terms in the above some for which $x$ has eigenvalues in $k^\times$.  The sum over these terms vanishes, because for such an $x$ we have $\tr\rho_\psi(xu)\nu_R(xu)=\psi(u)\tr\rho_\psi(x)\nu_R(x)$ for all $u\in U^1$.  All that remains are the elements $x=[\alpha,\beta,\gamma]$ with $\alpha\in k_2^\times\backslash k^\times$, and each of these are conjugate to a unique element of the form $tu$, with $t\in T\backslash k^\times$ and $u\in U$.  Each such conjugacy class has cardinality $q^2$, and the value of $\tr\rho_\psi(tu)$ on such a class is $-\psi(u)$.    Therefore
\begin{eqnarray*}
\mathcal{F}f(1)&=&-\frac{1}{q}\sum_{t\in T\backslash k^\times}\sum_{u\in U} \psi(u)\nu_r(tu).
\end{eqnarray*}
This reduces to $-q=-f(1)$ by a simple calculation, thus completing the proof of Theorem~\ref{rhoconstruction}.  

\begin{rmk}  The curve $X$ is isomorphic (over $\overline{k}$) to the Fermat curve $x^{q+1}+y^{q+1}+z^{q+1}=0$.  It appears in the construction of the so-called unipotent representation of $\GU_3(k)$;  see~\cite{Lusztig}.

There is also a connection to the theory of the discrete Weil representation.  We have $R^\times=T\rtimes H$, where $H$ is the $p$-Sylow subgroup of $R^\times$.  Furthermore, $U\cap H=U^1$ is the center of $H$.  Write $\psi^1$ for the (nontrivial) restriction of $\psi$ to $U^1$.  The group $H/\ker\psi^1$ is a discrete Heisenberg group.     By the Stone von-Neumann theorem, there is a unique irreducible representation $V_\psi$ of $H$ lying over $\psi$.  

The group $T$ embeds as a nonsplit torus in $\SL_2(k)$, and the conjugation action of $T$ on $H/\ker\psi^1$ extends to an action of $\SL_2(k)$ in a manner which fixes each element of $U^1$.  The uniqueness property of $V_\psi$ means that if $\alpha\in\SL_2(k)$ and $^\alpha V_\psi$ is the conjugate representation $g\mapsto V_\psi(\alpha(g))$, then there is an isomorphism $W(\alpha)\from ^\alpha V_\psi\isom V_\psi$ which is well-defined up to a scalar.  The operators $W(\alpha)$ give an {\em {a priori}} projective representation of $\SL_2(k)$ on the underlying space of $V_\psi$ which in fact lifts to a proper representation $W$, the Weil representation.  See for
instance~\cite{Gerardin:Weil} or, for the geometric point of view,
~\cite{Gurevich:Hadani:geometric}.  The operators $W(\alpha)$ together with the representation $V_\psi$ give a $q$-dimensional representation of $\SL_2(k)\rtimes H$;  restricting this to $T\rtimes H/\ker\psi^1=R^\times/\ker\psi^1$ gives the representation $\rho_\psi$ we have constructed in Theorem~\ref{rhoconstruction}. 

When $W$ is restricted to a nonsplit torus of $\SL_2(k)$, each nontrivial
character appears at most once, see Theorem 3
of~\cite{Gurevich:Hadani:fourier};  this implies the property of
$\rho_\psi$ given in Prop.~\ref{multone1}.
\end{rmk}

The case of $e=1$ in Theorem~\ref{rhonu}  follows from Theorem~\ref{rhoconstruction} once we observe the following:
\begin{enumerate}
\item There exists an isomorphism $\mathcal{R}_\FS\to R$.
\item Under this isomorphism, $\nu_\FS$ is identified with an additive character of the form $\nu_R$ described above.
\item The subgroup $\mathbf{U}_\FS\in\mathcal{R}_\FS^\times$ is identified with $U\subset R^\times$.
\item Choose an isomorphism $\iota\from\C\to\overline{\Q}_\ell$, then the complex character $\psi_\FS$ of $\mathbf{U}_\FS$ is identified with an $\ell$-adic character $\psi$ of $U$.
\item The condition that $\FS=(M_2(\OO_F),n,\alpha)$ be a simple stratum implies that the reduction of $\pi^n_F\alpha$ has irreducible characteristic polynomial, which in turn implies that $\psi$ is nontrivial on $U^1$.
\item The $\ell$-adic representation $\rho_\psi$ constructed in Theorem~\ref{rhoconstruction} with respect to the data of $\nu_R$ and $\psi$ may be transported via $\iota^{-1}$ to a complex representation of $\mathcal{R}_\FS^\times$ which satisfies the requirements of Theorem~\ref{rhonu}.
\end{enumerate}

\subsection{The case of level 0} \label{level0case} The linking order of level 0 is
$\LL_0=M_2(\OO_F)\times \OO_B$, and its quotient ring $\mathcal{R}_0$ is
$M_2(k)\times k_E$.  The additive character $\nu_0$ is of the form
$$\nu_0(x,y)=\nu(\tr_{M_2(k)/k} x -\tr_{k_E/k} y),$$ where $\nu$ is a nontrivial additive character of
$k$, and $\CF_0$ is the Fourier transform with respect to this
character. Let $\theta$ be a character of $k_E^{\times}$. Assume that
$\theta$ is {\em regular}, meaning that it does not factor through
the norm map $k_E^{\times}\to k^{\times}$.  It is well-known that there is an irreducible cuspidal
representation $\eta_\theta$ of $\GL_2(k_F)$ corresponding to $\theta$.  The character of this representation takes the value $-(\theta(\alpha)+\theta(\alpha^q))$ on an element $g\in\GL_2(k_F)$ with distinct eigenvalues $\alpha,\alpha^q\in k_E$ not lying in $k_F$.


Let $\rho_\theta$ be the character $\eta_\theta\otimes\theta^{-1}$
of $\mathcal{R}^{\times}_0=\GL_2(k_F)\times k_E^{\times}$.   The following proposition
concerns the Fourier transforms of matrix coefficients of
$\rho_\theta$.

\begin{prop}  \label{replevel0} For $f\in\mathcal{C}(\rho_\theta)$ we have
that $\CF_0 f$ is supported on $\mathcal{R}^{\times}_0$ and satisfies $\CF_0
f(y)=-f(y^{-1})$ for $y\in\mathcal{R}^{\times}_0$.
\end{prop}

\begin{proof}  We reduce this to two calculations relative to the
rings $M_2(k)$ and $k_E$, respectively.  Let $R_1=M_2(k)$,
$R_2=k_E$, and for $i=1,2$ let $\nu_i$ be the additive character of
$R_i$ defined by $\nu_i(x)=\nu_0(\tr_{R_i/k} x)$, so that
$\nu_\FS(x,y)=\nu_1(x)\nu_2(-y)$.



Write $\tau_{\theta,\nu}$ for the Gauss sum $\sum_{\alpha\in
k_E^{\times}}\theta(\alpha)\nu(\tr_{k_E/k_F}\alpha)$.  We claim that for all $f\in\mathcal{C}(\eta)$ we have that $\CF_1f$ is supported on $R_1^\times=\GL_2(k)$ and satisfies $$\CF_1f(y)=-\tau_{\theta,\nu}f(y^{-1}).$$

This is a straightforward calculation.  It is a special case of a calculation of epsilon factors of irreducible representations of $\GL_n$ which appears in~\cite{kondo};  these can always be expressed as a product of Gauss sums.  See also~\cite{macdonald}, Chap. IV.

The corresponding analysis for $R_2=k_E$ is simpler: define a
Fourier transform $\CF_2$ on $\mathcal{C}(R_2)$ by $\CF
f(y)=q^{-1}\sum_{x\in k_E^{\times}}f(x)\nu_2(-xy)$.  Then the Fourier
transform of the character $\theta^{-1}$ is supported on $k_E^{\times}$ and
equals $q^{-1}\tau_{\theta^{-1},\nu^{-1}}\theta$.

We may now complete the proof of the proposition.  For a
decomposable element $f=f_1\otimes f_2$ of $\mathcal{C}(R_1\times
R_2)$, we have $\CF_0 f=\CF_1 f_1\otimes \CF_2 f_2$.  If this same
$f$ is a matrix coefficient for
$\rho_\theta=\eta_\theta\otimes\theta^{-1}$ then we must have $\CF_0
f=-q^{-2}\tau_{\theta,\nu}\tau_{\theta^{-1},\nu^{-1}}f(y^{-1})$.  We
now use the classical identity of Gauss sums
$\tau_{\theta,\nu}\tau_{\theta^{-1},\nu^{-1}}=\#k_E=q^2$, and the
proof is complete.
\end{proof}

\section{Construction of the Jacquet-Langlands Correspondence}
\label{proof}

The construction of the family of rings $\LL_{\FS}$ together with
the representations $\rho_{\FS}$ of $\LL_{\FS}^{\times}$ will now be used
to construct certain representations $\Pi_{\FS}$ of $GL_2(F)\times
B^{\times}$.  We will then use Cor.~\ref{FEcor} to show that the family
$\Pi_{\FS}$ realizes the Jacquet-Langlands Correspondence.  This
will involve showing that the matrix coefficients of $\Pi_{\FS}$
satisfy the functional equation in Eq.~\ref{BIGFE} for sufficiently
many $\chi$. The heart of that calculation has already been
completed in Theorem~\ref{rhonu}.

Recall that $\mathbf{G}=\GL_2(F)\times B^{\times}$; this group has center
$Z(\mathbf{G})=F^{\times}\times F^{\times}$. Let $\FS=(\mathfrak{A},n,\alpha)$ be
a simple stratum in $M_2(F)$, and let
$\FS'=(\mathfrak{A}',n',\alpha')$ be its corresponding simple
stratum in $B$.  
From these data we have constructed a linking order $\LL_\FS$
and an irreducible representation $\rho_\FS$ of $\LL_\FS^{\times}$.  Let $\ell=n/e$, so that every supercuspidal representation of $\GL_2$ containing $\FS$ has level $\ell$, and likewise for $B^\times$.  
The intersection of $Z(\mathbf{G})$ with $\LL^{\times}_\FS$ is
$$Z(\mathbf{G})\cap \LL^{\times}_\FS=\set{(z_1,z_2)\in\OO_F^{\times}\times\OO_F^{\times}\;\vert\; v_F(z_1-z_2) \geq \ell}.$$ Here $v_F$ is the valuation on $F$.  By Theorem~\ref{rhonu}, $\rho_\FS$
vanishes on the diagonally embedded subgroup
$\Delta(F^{\times})\cap\LL_\FS^{\times}$. Choose a character $\omega$ of
$Z(\mathbf{G})$ which vanishes on $\Delta(F^{\times})$ and agrees with the
central character of $\rho_\FS$ on
$Z(\mathbf{G})\cap\LL_\FS^{\times}$.  We identify $\omega$ with a
character of $F^{\times}$ via its restriction to $F^{\times}\times\set{1}$.

We now extend $\rho_\FS$ to a representation on a larger group which contains $Z(\mathbf{G})$ and which intertwines $\rho_\FS$.  
Define a group $\mathcal{K}_\FS$ by $$\mathcal{K}_\FS=Z(\mathbf{G})\Delta(E^\times)\mathcal{L}_\FS^\times.$$  (Recall that $\Delta(E^\times)$ normalizes $\mathcal{L}_\FS^\times$, so this is indeed a group.)  There is a unique extension of $\rho_\FS$ to a representation $\rho_{\FS,\omega}$ of $\mathcal{K}_\FS$ which satisfies the conditions:
\begin{enumerate}
\item $\rho_{\FS,\omega}\vert_{Z(\mathbf{G})}=\omega$,
\item For $\beta\in E^\times$, $\rho_{\FS,\omega}(\Delta(\beta))=(-1)^{v_E(\beta)}\text{ if $E/F$ is ramified}$,
\item For $\beta\in E^\times$, $\rho_{\FS,\omega}(\Delta(\beta))=1\text{ if $E/F$ is unramified.}$
\end{enumerate}  

The group $\mathcal{K}_\FS$ is open and compact modulo its center.  We may now define the representation $\Pi_{\FS,\omega}$ of $\mathbf{G}$ as the induction of $\rho_{\FS,\omega}$ with compact supports:  $$\Pi_{\FS,\omega}=\Ind_{\mathcal{K}_\FS}^{\mathbf{G}}\rho_{\FS,\omega}.$$

We wish to confirm
that $\Pi_{\FS,\omega}$ satisfies the desired properties (1)-(3)
listed at the beginning of Section~\ref{linkingorderssection}.  It
is already apparent that (1) $\Pi_{\FS,\omega}$ vanishes on
$\Delta(F^{\times})$.  For property (2) we have the following:

\begin{Theorem}
\label{Ind}  $\Pi_{\FS,\omega}$ is the direct sum of representations of $\mathbf{G}$ of the form $\pi\otimes\check{\pi}'$, where $\pi$ (resp., $\pi'$) is a minimal supercuspidal irreducible representation of $\GL_2(F)$ (resp., $B^\times$) having central character $\omega$ and containing the stratum $\FS$ (resp., $\FS'$).  Every representation of either group having the above properties is contained in $\Pi_{\FS,\omega}$.  
\end{Theorem}

\begin{proof}  Note that $\mathcal{K}_\FS\subset J_\FS\times J_{\FS'}$ is a subgroup of finite index.   Let $$M=\Ind_{\mathcal{K}_S}^{J_\FS\times J_{\FS'}}\rho_{\FS,\omega}.$$  Then $M$ is a direct sum of irreducible representations of $J_{\FS}\times J_{\FS'}$ of the form $\Lambda\otimes\check{\Lambda}'$.   By Theorem~\ref{rhonu}, such a $\Lambda\otimes\check{\Lambda}'$ lies over the character $\psi_\FS=\psi_\alpha\otimes\psi_{\alpha'}^{-1}$ of $U_\FS\times U_{\FS'}$.   Therefore we have $\Lambda\in C(\psi_\alpha, \mathfrak{A})$ and $\Lambda'\in C(\psi_{\alpha'},\mathfrak{A}')$.  By Theorem~\ref{classification2}, $\pi=\Ind_{J_\FS}^{\GL_2(F)} \Lambda$ is an irreducible supercuspidal representation of $\GL_2(F)$ containing $\FS$.  Since $\rho_{\FS,\omega}$ has central character $\omega$, the same is true of $\pi$.  The reasoning is similar for $\pi'=\Ind_{J_\FS'}^{B^\times} \Lambda'$.  

Now assume $\pi$ is an irreducible supercuspidal representation of $\GL_2(F)$ containing $\FS$ with central character $\omega$.   We claim that $\pi$ is contained in $\Pi_{\FS,\omega}\vert_{\GL_2(F)}$.  Since $\Pi_{\FS,\omega}$ is induced from the representation $\rho_{\FS,\omega}$ of $\mathcal{K}_\FS$, the restriction of $\Pi_{\FS,\omega}$ to $\GL_2(F)$ contains $\Ind_{\mathcal{K}_\FS\cap\GL_2(F)}^{\GL_2(F)}
\rho_{\FS,\omega}$. Therefore to show that $\pi$ is contained in
$\Pi_{\FS,\omega}\vert_{\GL_2(F)}$ it suffices to prove that $\pi\vert_{\mathcal{K}_\FS\cap\GL_2(F)}$ meets
$\rho_{\FS,\omega}\vert_{\mathcal{K}_\FS\cap\GL_2(F)}$.   By
Prop.~\ref{linkingorders} we have
$$\mathcal{K}_\FS\cap\GL_2(F)=F^{\times}H_\FS.$$ The central characters of
$\pi$ and $\rho_{\FS,\omega}$ agree on $F^{\times}$ by hypothesis.
Therefore it suffices to show that $\pi\vert_{H_S}$ meets $\rho_S\vert_{H_S}$.  By Theorem~\ref{classification2}, $\pi$ contains a representation $\Lambda\in C(\mathfrak{A},\psi_\alpha)$.  This means that the restriction of $\pi$ to $H_S$ contains $\Lambda\vert_{H_S}$, which must agree with $\rho_S\vert_{H_S}$ by Theorem~\ref{HS}.  The case of a representation of $B^\times$ is similar.  
\end{proof}

The third required property of $\Pi_{\FS,\omega}$, concerning the
zeta functions attached to matrix coefficients of this
representation, shall follow from Prop.~\ref{rhonu}.  We will
start by translating Prop.~\ref{rhonu} into a statement
concerning the Fourier transforms of matrix coefficients of
$\Pi_{\FS,\omega}$. 

For a function $f$ on $\mathbf{G}$, and a real number $s$, let $f_s$ be the function $$f_s(g)=f(g)\norm{g_1}^{s-2}\norm{g_2}^{-s}.$$  If $f\in\mathcal{C}(\Pi_{\FS,\omega})$, we wish to consider Fourier transforms of the functions $f_s$.  The functions $f_s$ are supported on $\mathcal{K}_\FS$, which is not compact, so their Fourier transforms do not {\em a priori} converge.  Nonetheless we may formally define the Fourier transform $\widehat{f_s}$ by integrating $f_s(x)\psi_{\mathbf{A}}(xy)$ over each of the (compact) cosets of $\mathcal{L}_\FS^\times$ in $\mathbf{G}$.  Since $f$ is a linear combination of $\mathbf{G}\times\mathbf{G}$-translates of vectors in $\mathcal{C}(\rho_\FS)$, which are in turn supported on $\mathcal{L}_\FS^\times$, we see that the integral vanishes on all but finitely many of the cosets.  We now evaluate $\widehat{f_s}$.  

\begin{prop}
For a matrix coefficient $f\in\mathcal{C}(\Pi_{\FS,\omega})$, we have
\begin{equation}
\label{widehat}
\widehat{f_s}=-\check{f}_{2-s}.
\end{equation}
\end{prop}
  
\begin{proof}  We will first prove the corresponding statement relative to the Fourier transform $\mathcal{F}_S$:
\begin{equation}
\label{FStransform}
\mathcal{F}_Sf_s=\pm \check{f}_{2-s},
\end{equation}
where the sign is 1 if $E/F$ is ramified and $-1$ otherwise.  It will suffice to prove Eq.~\ref{FStransform} for matrix coefficients $f\in\mathcal{C}(\rho_S)$ supported on the group $\mathcal{L}_S^\times$.  Indeed, glancing at the rules in Eq.~\ref{FTproperty} shows that the validity of Eq.~\ref{FStransform} is unchanged upon replacing $f$ by $L_gR_hf$ for elements $g,h\in\mathbf{G}$, and these translates span $\mathcal{C}(\Pi_{S,\omega})$ as $f$ runs through $\mathcal{C}(\rho_S)$.  But for $f\in\mathcal{C}(\rho_S)$, Eq.~\ref{FStransform} follows from Theorem~\ref{rhonu}, because $f_s=f$.

To derive Eq.~\ref{widehat} from Eq.~\ref{FStransform} we must compare the Fourier transforms $\tilde{f}$ and $\mathcal{F}_\FS f$.  The first transform is taken relative to the additive character $\psi_{\mathbf{A}}$, while the second is taken relative to the character $\nu_S$.  The characters are related by $\nu_S(x)=\psi_{\mathbf{A}}(\Delta(\beta)^{-1}x)$ for an element $\beta\in E^\times$ of valuation $n$;  formally we have $\hat{f}=\norm{\Delta(\beta)}^{-1}R_{\beta}\mathcal{F}_Sf$.  Applying this to the function $f_s$, we see that 
\begin{eqnarray*}
\widehat{f_s}&=&\norm{\Delta(\beta)}^{-1}R_\beta\mathcal{F}_Sf_s\\
&=&\pm \norm{\Delta(\beta)}^{-1} R_\beta (\check{f})_{2-s}\\
&=&\pm (R_\beta\check{f})_{2-s},
\end{eqnarray*}
where the sign is positive if and only if $E/F$ is ramified.  If $E/F$ is ramified, then $\beta\in E^\times$ has odd valuation, and $R_\beta\check{f}=-\check{f}$ because $\rho_{\FS,\omega}$ takes the value $-1$ on such elements.  If $E/F$ is unramified, then $\rho_{\FS,\omega}(\Delta(\beta))=1$, and therefore $R_\beta\check{f}=\check{f}$.  The proposition follows.
\end{proof}

We are ready to prove the appropriate functional equation for the
zeta functions attached to $\Pi_{\FS,\omega}$.  Recall that for an
admissible representation $\Pi$ of $\mathbf{G}$, and for
$\Phi\in\C^\infty_c(\mathbf{A})$, $f\in\mathcal{C}(\Pi_{S,\omega})$, we defined
the zeta function 
\begin{eqnarray*}
\zeta(\Phi,f,s)&=&\int_{\mathbf{G}}
\Phi(g)f(g)\norm{g_1}^s\norm{g_2}^{2-s}\;d\mu^{\times}(g)\\
&=&\int_{\mathbf{G}}\Phi(g)f_s(g)\;d\mu(g)\\
\end{eqnarray*}
where $\mu$
is a Haar measure on $\mathbf{A}$.  
\begin{Theorem}\label{finalFE} For all $\Phi\in\mathcal{C}^\infty_c(\mathbf{A})$ all $f\in\mathcal{C}(\Pi_\omega)$, and all characters $\chi$ of $F^\times$ of conductor not exceeding $\ell$, we have $$\zeta(\Phi,\chi f,s)=-\zeta(\hat{\Phi},\chi^{-1}\check{f},2-s).$$
\end{Theorem}

\begin{proof} It suffices to prove the claim for $\chi=1$.  Indeed, if $f\in \mathcal{C}(\Pi_{S,\omega})$, then $\chi f$ lies in $\mathcal{C}(\Pi_{S',\chi^2\omega})$ for a different simple stratum $S'=(\mathcal{A}_1,n_1,\alpha'_1)$.  (Explicitly:  let $\beta\in\gp_E^{-n}$ be such that $(\chi\circ N_{E/F})(1+x)=\psi_F(\tr_{E/F}\beta x)$ for all $x\in\gp_E^n$;  then $\alpha_1'=\alpha_1+\beta$.)

Assume therefore that $\chi=1$.  We will take the measure $d\mu$ to equal $d\mu_\psi$, the measure dual to the character $\psi_{\mathbf{A}}$.  Since $\hat{\hat{\Phi}}(x)=\Phi(-x)$ we have that $\zeta(\hat{\hat{\Phi}},f,s)=\zeta(\Phi,f,s)$ by a change of variable $g\mapsto -g$ in the integral.  Now we apply Prop.~\ref{widehat}:
\begin{eqnarray*}
\zeta(\Phi,f,s)&=&\zeta(\hat{\hat{\Phi}},f,s)\\
&=&\int_{\mathbf{G}}\hat{\hat{\Phi}}(g)f_s(g)\;d\mu_\psi(g)\\
&=&\int_{\mathbf{G}}\hat{\Phi}(g)\hat{f_s}(g)\;d\mu_\psi(g)\\
&=&-\int_{\mathbf{G}}\hat{\Phi}(g)\check{f}_{2-s}(g)\;d\mu_\psi(g)\\
&=&-\zeta(\hat{\Phi},\check{f},2-s).
\end{eqnarray*}
\end{proof}

\subsection{The construction in level zero.}  The preceding constructions carry over easily to the case of level
zero. Let $E$ be an unramiifed quadratic extension of $F$. Letting
$\theta$ denote a regular character of $k_E^{\times}$, we constructed in
Section~\ref{level0case} a representation $\rho_\chi$ of the unit
group of the linking order $\LL_0$. Choose a central
character $\omega$ of $F^{\times}\times F^{\times}$ which agrees with the central
character of $\rho_\theta$ on $(F^{\times}\times F^{\times})\cap
\LL_0^{\times}=\OO_F^{\times}\times\OO_F^{\times}$.  Extend $\rho_\theta$ to a
representation $\rho_{\theta,\omega}$ of $\mathcal{K}_0=(F^{\times}\times
F^{\times})\LL_0$ agreeing with $\omega$ on the center.  Finally,
let $\Pi_{\theta,\omega}$ be the induced representation of
$\rho_{\theta,\omega}$ from $\mathcal{K}_0$ up to $\GL_2(F)\times
B^{\times}$.

Then Thm.~\ref{Ind} has the following analogue:
\begin{Theorem}\label{Indlevel0} Let $\pi$ be a minimal irreducible admissible
representation of $\GL_2(F)$ (resp., $B^{\times}$) with central character
$\omega$ (resp., $\omega^{-1}$).  The following are equivalent:
\begin{enumerate}
\item $\pi$ has level zero, and the restriction of $\pi$ to
$\GL_2(\OO_F)$ (resp., $\OO_B^{\times}$) contains a representation inflated
from the representation $\eta_\theta$ of $\GL_2(k)$ (resp., the
character $\theta$ of $k_E^{\times}$.)
\item $\pi$ is contained in $\Pi_{\theta,\omega}\vert_{\GL_2(F)}$ (resp.,
$\check{\pi}$ is contained in $\Pi_{\theta,\omega}\vert_{B^{\times}}$).
\end{enumerate}
\end{Theorem}

Similarly, Prop.~\ref{finalFE} has this analogue:
\begin{Theorem}
\label{finalFElevel0}  For $\Phi\in C^\infty_c(\mathbf{A})$,
$f\in\mathcal{C}(\Pi_{\omega,\theta})$, we have $$\zeta(\Phi,\chi
f,s)=-\zeta(\hat{\Phi},\chi^{-1}\check{f},2-s)$$ for all characters
$\chi$ of $F^{\times}$ which are trivial on $1+\gp_F$.
\end{Theorem}
The proofs of Thm.~\ref{Indlevel0} and Prop.~\ref{finalFElevel0} run
exactly the same as those of Thm.~\ref{Ind} and Prop.~\ref{finalFE}.

\subsection{Conclusion of the construction.}
Our construction of the Jacquet-Langlands correspondence is nearly complete.

\begin{Theorem}  For every irreducible representation $\pi'$ of $B^{\times}$ of dimension greater than one,
there is a supercuspidal representation $\pi$ of $\GL_2(F)$ for
which $\pi$ and $\pi'$ correspond.  Every supercuspidal
representation of $\GL_2(F)$ arises this way.
\end{Theorem}

\begin{proof} By Theorem~\ref{classification1}
we may twist $\pi'$ to assume either that $\check{\pi}'$ contains a
simple stratum $\FS'$, or else that it is level zero. In the first
case, Let $\FS=(\mathfrak{A},n,\alpha)$ be the corresponding stratum
in $M_2(F)$. Applying Theorem~\ref{Ind}, $\check{\pi}'$ is contained
in $\Pi_{\FS,\omega}\vert_{B^{\times}}$, where $\omega$ is the central
character of $\pi'$.   Suppose $\pi$ is a representation of
$\GL_2(F)$ appearing in $\Hom_{B^{\times}}(\check{\pi},\Pi_{\FS,\omega})$.
Then $\pi\otimes\check{\pi}'$ appears in $\Pi_{\FS,\omega}$.

Applying Theorem~\ref{Ind} again, we find that $\pi$ contains $\FS$.
Combining Cor.~\ref{FEcor} with Prop.~\ref{finalFE} shows that
$\pi'$ and $\pi$ correspond.

The logic is the same if $\pi'$ has level zero:  In this case
$\check{\pi}'$ contains a character of $\OO_B^{\times}$ inflated from a
character $\theta$ of a quadratic extension of $k$, so that
$\check{\pi}'$ is contained in $\Pi_{\theta,\omega}\vert_{B^{\times}}$.
Proceeding as above, we find a representation $\pi$ of $\GL_2(F)$
corresponding to $\pi'$.

If $\pi$ is a given supercuspidal representation of $\GL_2(F)$,
the argument above may be reversed to find a representation $\pi'$ of
$B^{\times}$ which corresponds to it. This concludes the proof.
\end{proof}

\bibliographystyle{amsalpha}
\bibliography{mybibfile}

\providecommand{\bysame}{\leavevmode\hbox to3em{\hrulefill}\thinspace}
\providecommand{\MR}{\relax\ifhmode\unskip\space\fi MR }
\providecommand{\MRhref}[2]{%
  \href{http://www.ams.org/mathscinet-getitem?mr=#1}{#2}
}
\providecommand{\href}[2]{#2}
\begin{thebibliography}{DKV84}

\bibitem[Bad02]{badulescu}
Alexandru~Ioan Badulescu, \emph{Correspondance de {J}acquet-{L}anglands pour
  les corps locaux de caract\'eristique non nulle}, Ann. Sci. \'Ecole Norm.
  Sup. (4) \textbf{35} (2002), no.~5, 695--747.

\bibitem[BH00]{HenniartJLII}
Colin~J. Bushnell and Guy Henniart, \emph{Correspondance de
  {J}acquet-{L}anglands explicite. {II}. {L}e cas de degr\'e \'egal \`a la
  caract\'eristique r\'esiduelle}, Manuscripta Math. \textbf{102} (2000),
  no.~2, 211--225.

\bibitem[BH05]{HenniartJLIII}
\bysame, \emph{Local tame lifting for {${\rm GL}(n)$}. {III}. {E}xplicit base
  change and {J}acquet-{L}anglands correspondence}, J. Reine Angew. Math.
  \textbf{580} (2005), 39--100.

\bibitem[BH06]{Henniart:Bushnell}
C.~Bushnell and G.~Henniart, \emph{The local langlands conjecture for
  $\text{GL}(2)$}, Springer-Verlag, 2006.

\bibitem[BK93]{BushnellKutzko}
Colin~J. Bushnell and Philip~C. Kutzko, \emph{The admissible dual of {${\rm
  GL}(N)$} via compact open subgroups}, Annals of Mathematics Studies, vol.
  129, Princeton University Press, Princeton, NJ, 1993.

\bibitem[BW04]{Wewers}
I.~Bouw and S.~Wewers, \emph{Stable reduction of modular curves}, Modular
  Curves and abelian varieties, Birkhauser, 2004.

\bibitem[Car86]{Carayol:ladicreps2}
H.~Carayol, \emph{Sur les repr\'esentations $\ell$-adiques associ\'ees aux
  formes modulaires de {H}ilbert}, Annales scientifiques de l'\'E.N.S.
  \textbf{19} (1986), no.~3, 409--468.

\bibitem[DKV84]{DKV}
P.~Deligne, D.~Kazhdan, and M.-F. Vign{\'e}ras, \emph{Repr\'esentations des
  alg\`ebres centrales simples {$p$}-adiques}, Representations of reductive
  groups over a local field, Travaux en Cours, Hermann, Paris, 1984,
  pp.~33--117.

\bibitem[G{\'e}r77]{Gerardin:Weil}
Paul G{\'e}rardin, \emph{Weil representations associated to finite fields}, J.
  Algebra \textbf{46} (1977), no.~1, 54--101.

\bibitem[G{\'e}r79]{Gerardin}
\bysame, \emph{Cuspidal unramified series for central simple algebras over
  local fields}, Automorphic forms, representations and {$L$}-functions
  ({P}roc. {S}ympos. {P}ure {M}ath., {O}regon {S}tate {U}niv., {C}orvallis,
  {O}re., 1977), {P}art 1, Proc. Sympos. Pure Math., XXXIII, Amer. Math. Soc.,
  Providence, R.I., 1979, pp.~157--169.

\bibitem[GH07]{Gurevich:Hadani:geometric}
S.~Gurevich and R.~Hadani, \emph{The geometric {W}eil representation}, Selecta
  Mathematica \textbf{13} (2007), no.~3, 465--481.

\bibitem[GH08]{Gurevich:Hadani:fourier}
S.~Gurevich and R.~Hadani., \emph{On the diagonalization of the discrete
  {F}ourier transform}, Applied and Computational Harmonic Analysis (2008).

\bibitem[GJ72]{Godement:Jacquet}
Roger Godement and Herv{\'e} Jacquet, \emph{Zeta functions of simple algebras},
  Lecture Notes in Mathematics, Vol. 260, Springer-Verlag, Berlin, 1972.

\bibitem[GL85]{GerardinLi}
Paul G{\'e}rardin and Wen-Ch'ing~Winnie Li, \emph{Fourier transforms of
  representations of quaternions}, J. Reine Angew. Math. \textbf{359} (1985),
  121--173.

\bibitem[Hen93]{HenniartJLI}
Guy Henniart, \emph{Correspondance de {J}acquet-{L}anglands explicite. {I}.
  {L}e cas mod\'er\'e de degr\'e premier}, S\'eminaire de {T}h\'eorie des
  {N}ombres, {P}aris, 1990--91, Progr. Math., vol. 108, Birkh\"auser Boston,
  Boston, MA, 1993, pp.~85--114.

\bibitem[How77]{Howe}
Roger~E. Howe, \emph{Tamely ramified supercuspidal representations of {${\rm
  Gl}\sb{n}$}}, Pacific J. Math. \textbf{73} (1977), no.~2, 437--460.

\bibitem[JL70]{JacquetLanglands}
Herv\'e Jacquet and Robert Langlands, \emph{Automorphic forms on {${\rm
  GL}(2)$}}, Lecture Notes in Mathematics, vol. 114, Springer-Verlag,
  Berlin-New York, 1970.

\bibitem[Kon63]{kondo}
Takeshi Kondo, \emph{On {G}aussian sums attached to the general linear groups
  over finite fields}, J. Math. Soc. Japan \textbf{15} (1963), 244--255.

\bibitem[Lus78]{Lusztig}
George Lusztig, \emph{Representations of finite {C}hevalley groups}, CBMS
  Regional Conference Series in Mathematics, vol.~39, American Mathematical
  Society, Providence, R.I., 1978, Expository lectures from the CBMS Regional
  Conference held at Madison, Wis., August 8--12, 1977.

\bibitem[Mac73]{macdonald}
I.G. Macdonald, \emph{Symmetric functions and {H}all polynomials}, Oxford
  University Press, 1973.

\bibitem[Rog83]{Rogawski}
Jonathan~D. Rogawski, \emph{Representations of {${\rm GL}(n)$} and division
  algebras over a {$p$}-adic field}, Duke Math. J. \textbf{50} (1983), no.~1,
  161--196.

\bibitem[Yos04]{yoshida}
Teruyoshi Yoshida, \emph{On non-abelian {L}ubin-{T}ate theory via vanishing
  cycles}, 2004.

\end{thebibliography}

\vspace{2cm}

\end{document}